\newtheorem{lem}{Lemma}
\newtheorem{thm}{Theorem}
\newtheorem{thA}{Theorem A}
\newtheorem{corollary}{Corollary}
\newtheorem{proposition}{Proposition}
\newtheorem{rem}{Remark}
\title{Uniqueness and multiple existence of positive radial solutions of the Brezis-Nirenberg Problem on annular domains in ${\Bbb S}^{3}$}
\author[N. Shioji]{Naoki Shioji${}^\dag$}
\author[S. Tanaka]{Satoshi Tanaka${}^*$}
\author[K. Watanabe]{Kohtaro Watanabe${}^\ddag$}
\email[Naoki Shioji]{shioji193@gmail.com}
\email[Satoshi Tanaka]{satoshi.tanaka.d4@tohoku.ac.jp}
\email[Kohtaro Watanabe]{wata@nda.ac.jp}
\address[Naoki Shioji]{Department of Mathematics, 
Faculty of Engineering,
Yokohama National University,
Tokiwadai, Hodogaya-ku,
Yokohama 240-8501, Japan}
\address[Satoshi Tanaka]{
Mathematical Institute, Tohoku University,
6-3-09 Aramaki-Aza-Aoba, Aoba-ku, Sendai 980-8579, Japan
}
\address[Kohtaro Watanabe]{
Department of Computer Science, National Defense Academy, 1-10-20
Hashirimizu, Yokosuka 239-8686, Japan}
\subjclass[2010]{34B15,35J65}
\begin{document}
\begin{abstract}
The uniqueness and multiple existence of positive radial solutions to the Brezis-Nirenberg problem on a domain in the 3-dimensional unit sphere 
${\mathbb S}^3$
\begin{equation*}
\left\{
\begin{aligned}
\Delta_{{\mathbb S}^3}U -\lambda U + U^p&=0,\, U>0 && \text{in $\Omega_{\theta_1,\theta_2}$,}\\
U &= 0&&\text{on $\partial \Omega_{\theta_1,\theta_2}$,}
  \end{aligned}
\right.
\end{equation*}
for $-\lambda_{1}<\lambda\leq 1$ are shown, where $\Delta_{{\mathbb S}^3}$ is the Laplace-Beltrami operator, $\lambda_{1}$ is the first eigenvalue of $-\Delta_{{\mathbb S}^3}$ and  $\Omega_{\theta_1,\theta_2}$ is an annular domain in ${\mathbb S}^3$: whose great circle distance (geodesic distance)
from $(0,0,0,1)$ is greater than $\theta_1$ and less than $\theta_2$. A solution is said to be radial if it depends only on this geodesic distance. 
It is proved that the number of positive radial solutions of the problem changes with respect to the exponent $p$ and parameter $\lambda$
when $\theta_1=\varepsilon$, $\theta_2=\pi-\varepsilon$ and $0<\varepsilon$ is sufficiently small.
\end{abstract}
\keywords{Brezis-Nirenberg problem, multiple existence of solutions, uniqueness of solutions, annular domain in ${\Bbb S}^{3}$}
\maketitle
\section{Introduction}
We show the uniqueness and multiple existence of positive radial solutions to Brezis-Nirenberg type problem on 3-dimensional unit sphere ${\mathbb S}^3$:
 \begin{equation}\label{BNAN1}
\left\{
\begin{aligned}
\Delta_{{\mathbb S}^3}U -\lambda U + U^p&=0,\,\,\,\,&& \text{in $\Omega_{\theta_1,\theta_2}$,}\\
U &= 0&&\text{on $\partial \Omega_{\theta_1,\theta_2}$,}
  \end{aligned}
\right.
\end{equation}
in the case $-\lambda_{1}<\lambda\leq 1$, where $\Delta_{{\mathbb S}^3}$ is the Laplace-Beltrami operator, $\lambda_{1}$ is the first eigenvalue of $-\Delta_{{\mathbb S}^3}$ and $\Omega_{\theta_1,\theta_2}$ is an annular domain on ${\mathbb S}^3$: whose great circle distance (geodesic distance) from the North Pole $(0,0,0,1)$ is greater than $\theta_1$ and less than $\theta_2$. 
A solution is said to be ``radial'' if it depends only on this geodesic distance. 

The studies of nonlinear elliptic equations on Riemannian manifolds are recently attracting wide
interest because of the characteristic nature of solutions affected by their curvatures.
The Brezis-Nirenberg problem on constant curvature spaces continues to be studied as a standard problem. 
For the case of hyperbolic space, the existence and non-existence of positive solutions of the Brezis-Nirenberg problems on an entire space or a domain are considered first by Mancini and Sandeep \cite{MR2483639}; see also \cite{MR3016513,MR1937548,MR3956507,SWH2015}. We note that their asymptotic behavior is considered in Bandle and Kabeya \cite{MR3033174}. Further, Morabito \cite{MR3520232} studies a problem of expanding annuli in some geometric spaces, which
include an $n$-dimensional hyperbolic space as a typical case, and shows the existence of non-radial solutions with a bifurcation argument.
We note, Hasegawa \cite{Hasegawa2015,Hasegawa2017} studies the critical exponent of H\'enon type equation for the stability of solutions. 
He also studies the critical exponent with respect to the existence of sign-changing radial solutions of H\'enon type equation \cite{Hasegawa2019}.

On the other hand, for the problems on the sphere, Bandle and Peletier \cite{MR1666821} and Bandle and Benguria \cite{MR1878530} study the scalar-field equation on geodesic balls (spherical caps) in ${\mathbb S}^3$ and Bandle and Wei \cite{BW2008}, Bandle, Kabeya and Ninomiya \cite{BKN2010,BKN2019} study the same problem on spherical caps in $\mathbb{S}^N$; see also Kosaka and Miyamoto \cite{KM2019} and Hirose \cite{Hirose2022}.
For the Brezis-Nirenberg problem on spherical cap, Brezis and Peletier \cite{BP2006} studies the problem in the case of ${\mathbb S}^3$ with critical Sobolev exponent.

In contrast, very little is known about the problems on annular domains in $\mathbb{S}^N$, except the results of Shioji and Watanabe \cite{MR3470747, SW2020-1}.
From this situation, in the present paper, we investigate the multiple existence of positive radial solutions of the Brezis-Nirenberg problem in annular domains, which are symmetric with respect to the equator of ${\Bbb S}^3$ (hence $\theta_{1}=\varepsilon$ and $\theta_{2}=\pi-\varepsilon$).
Since the domains are symmetric with respect to the equator, we can naturally expect the existence of solutions symmetric with respect to the equator, which we call these solutions ``radial even function solutions'' for short. 
As stated, we are concerned with the multiple existence of solutions, though, as well as, we have interests in the qualitative nature of
solutions, concretely the existence and non-existence of radial non-even function solutions.

Main result is as follows:
\begin{thm}\label{thmmain}
For each $p>1$, there exists $\varepsilon_p\in(0,\pi/2)$ such that if $0<\varepsilon\le \varepsilon_{p}$, then 
the following \textup{(i)--(vii)} hold:
\begin{enumerate}
\item[(i)] If $-\lambda_{1}<\lambda\leq 0$ and $p>1$, then problem \eqref{BNAN1}
has a unique positive radial solution and it is an even function, where 
\begin{align}\label{feigen}
\lambda_{1}=\frac{\pi^2}{4\left(\frac{\pi}{2}-\epsilon\right)^{2}}-1>0
\end{align}
is the first eigenvalue of the Laplacian $-\Delta_{{\mathbb S}^3}$; 
\item[(ii)] if $0<\lambda\leq 3/4$ and $1<p\leq 5$, then problem \eqref{BNAN1} has a unique positive radial solution and it is an even function;
\item[(iii)] if $0<\lambda\leq 3/4$ and $p>5$, then problem \eqref{BNAN1} has at least five positive radial solutions (at least three even function solutions and at least two non-even function solutions);
\item[(iv)] if $3/4<\lambda\leq 1$ and $1<p\leq I(\lambda)$, then problem \eqref{BNAN1} has a unique positive radial solution and it is an even function, where
\begin{align}\label{I} 
I(\lambda)=\left(7-6\lambda-\cos\left(\pi\sqrt{1-\lambda}\right)\right)/\left(2\lambda-1-\cos\left(\pi\sqrt{1-\lambda}\right)\right) ; 
\end{align}  
\item[(v)] if $3/4<\lambda\leq 1$ and $3/\lambda+1<p\leq 5$, then problem \eqref{BNAN1} has at least three positive radial solutions (unique even function solution and at least two non-even function solutions);
\item[(vi)] if $3/4<\lambda\leq 1$ and $p>5$ then problem \eqref{BNAN1} has at least five positive radial solutions (at least three even function solutions and at least two non-even function solutions);
\item[(vii)] if $3/4<\lambda\leq 1$ and $1<p\leq 5$, then problem \eqref{BNAN1} has a unique positive radial even function solution.
\end{enumerate}
\end{thm}
Here, we state some remarks about Theorem \ref{thmmain}.
\begin{rem}\label{remth}\quad\\
\vspace{-5mm}
 \begin{enumerate}
\item The exponent $p+1=6$, which separates unique existence and multiple existence of positive even function solutions,
coincides with the critical exponent of Sobolev embedding $H^{1}_{0}(\Omega)\hookrightarrow L^{p+1}(\Omega)$ where $\Omega$ is a three-dimensional bounded domain.    
\item In the cases of Theorem \ref{thmmain} (iv) and (v), the uniqueness of positive radial solutions breaks at some point in the interval $[I(\lambda),3/\lambda +1]$. The exact value of $p$, that separates the ``unique existence'' from ``multiple existence'' of positive radial solutions, remains open. However, if $\lambda$ is sufficiently close to $3/4$, the difference $(3/\lambda +1)-I(\lambda)$ is arbitrarily small; see Figure \ref{fig:sol}.
\item For the case $0<p<1$ (sublinear case), from general results of sublinear ODE (for example, p. 266 of Walter \cite{Walter}), we know that \eqref{BNAN1} has a unique positive radial solution. 
Moreover, from the non-increasing property of $h(t)$ in \eqref{ht} and the application of celebrated Gidas-Ni-Nirenberg theorem \cite{MR544879}, it is an even function.
\end{enumerate}
\end{rem}
\begin{figure}[htb]
\begin{center}
  \includegraphics[scale=0.85 ]{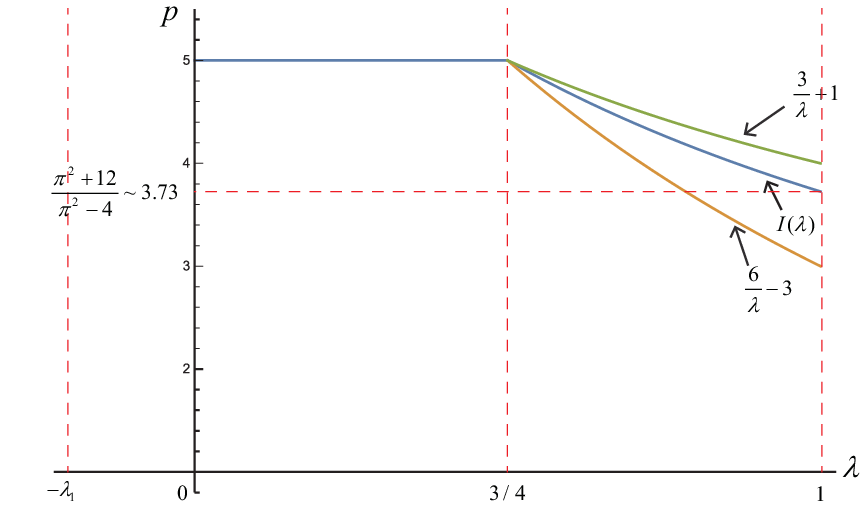}
\caption{Each graphs appearing in Theorem 1.}
\label{fig:sol}
\end{center}
\end{figure}

Here, we note that from Lemma \ref{uniqeven} below, it is demonstrated that 
the solution which satisfy Theorem 1.6 of Brezis-Pletier \cite{BP2006} is never achieved
within the range $-\lambda_1<\lambda\leq 1$:
\begin{thA}[{\cite[Theorem 1.6]{BP2006}}]
Let $m\geq 1$, $p=5$ and $\lambda>m(m+1)$. For every $k\in \{1,2,\ldots,m\}$, there exists a solution $\tilde{u}_{k}$ 
of \eqref{eqr} with boundary condition $\tilde{u}_{k}(-\pi/2)=\tilde{u}_{k}(\pi/2)\geq 0$  and following properties
\begin{enumerate}
\item[(a)]$\tilde{u}_{k}(r)$ has exactly $k$ local maxima (or ``spikes'') on $(-\pi/2,\pi/2)$
\item[(b)]$\tilde{u}_{k}(-r)=\tilde{u}_{k}(r)$,\,\,$(0<r<\pi/2)$
\item[(c)]$\tilde{u}_{k}(\pi/2)<\vert \lambda\vert^{\frac{1}{p-1}}$.
\end{enumerate}
\end{thA}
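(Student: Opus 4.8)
Since properties (a)--(c) concern the profile of a single radial solution, the plan is to analyse the radial reduction \eqref{eqr} as a one-dimensional problem and to exploit the conformal special structure of the critical exponent $p=5$ on $\mathbb{S}^3$. First I would remove the first-order drift by the substitution $w=U\sin\theta$, with $\theta$ the geodesic distance, under which the radial equation becomes
\[
w''+(1-\lambda)\,w+\frac{w^{5}}{\sin^{4}\theta}=0,\qquad \theta\in(0,\pi),
\]
together with the pole regularity $w(0)=w(\pi)=0$; equivalently, passing to the cylindrical variable $\cos\theta=\tanh t$ and $v=(\operatorname{sech} t)^{1/2}U$ turns it into the autonomous-plus-localised form $\ddot v=\tfrac14 v+(\lambda-\tfrac34)(\operatorname{sech}^{2}t)\,v-v^{5}$, which carries a Hamiltonian structure. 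The even symmetry (b) is then built in: the equation is invariant under reflection about the equator $\theta=\pi/2$ (that is, $t\mapsto-t$), so it suffices to construct solutions with $U'(\pi/2)=0$ and reflect, and I would parametrise the even solutions by the shooting value $a=U(\pi/2)$, the value at the equator $\theta=\pi/2$.

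The counting of spikes (a) and the threshold $\lambda>m(m+1)$ come from the linearisation about the constant solution $U\equiv\lambda^{1/4}$, which equals $|\lambda|^{1/(p-1)}$ for $p=5$. In the $w$-variable this constant is $w_0=\lambda^{1/4}\sin\theta$, and writing $w=w_0+\eta$ gives $\eta''+(1+4\lambda)\eta=0$; thus the natural oscillation frequency is $\sqrt{1+4\lambda}$, and the symmetric Dirichlet mode of order $2k+1$ becomes available exactly when $\sqrt{1+4\lambda}>2k+1$, i.e. when $\lambda>k(k+1)$, so for $\lambda>m(m+1)$ all modes $k=1,\dots,m$ are present. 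To turn this into genuine solutions with exactly $k$ maxima I would run a shooting/time-map argument: let $N(a)$ denote the number of local maxima of the solution launched from $a=U(\pi/2)$ with $U'(\pi/2)=0$, use the Hamiltonian energy to show that $N$ is a well-defined integer-valued function that varies monotonically as $a$ is moved away from the equilibrium value $\lambda^{1/4}$, and invoke the intermediate value theorem to select, for each $k\le m$, a shooting value at which the trajectory reaches the pole regularly with exactly $k$ spikes. The strict inequality (c) I would obtain from energy confinement: along the selected orbit the solution stays inside the potential well surrounding the equilibrium, so its pole value lies strictly below $\lambda^{1/4}$.

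The main obstacle is the interplay between the critical exponent and the two singular endpoints. Because $p=5$ is the conformal power, the amplitude of the shooting orbits is only borderline controlled, and one must prove that, as $a$ varies, the trajectories limit onto regular solutions at the poles (where the coefficient $1/\sin^{4}\theta$ is singular) rather than escaping to infinity in finite ``time''; this requires a careful a priori energy bound together with a Sturm-type comparison making the oscillation count $N(a)$ both monotone and surjective onto $\{1,\dots,m\}$. Equivalently, in a global-bifurcation formulation one would bifurcate the $k$-spike branch from $U\equiv\lambda^{1/4}$ at $\lambda=k(k+1)$, and the crux would be to show, via a priori bounds ruling out blow-up, that the branch persists for all $\lambda>k(k+1)$ while preserving its nodal structure. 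Handling the singular-endpoint and criticality issues simultaneously is where the real work lies; the symmetry reduction (b) and the threshold arithmetic $\sqrt{1+4\lambda}=2k+1\Leftrightarrow\lambda=k(k+1)$ are comparatively routine.
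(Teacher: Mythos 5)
You should first be aware that the paper contains no proof of this statement: Theorem A is imported verbatim from Brezis--Peletier \cite[Theorem 1.6]{BP2006} and is quoted only as a foil for Corollary \ref{corop} (which, via Lemma \ref{uniqeven}, shows such spike solutions cannot occur for $-\lambda_1<\lambda\le 1$). So there is no internal proof to compare against, and your proposal must be judged on whether it would constitute a proof on its own. Your reductions are correct and are the standard ones in this circle of problems: with $\theta$ the geodesic variable, $w=U\sin\theta$ indeed turns \eqref{eqr} into $w''+(1-\lambda)w+w^{5}\sin^{-4}\theta=0$ (this is the paper's gauge $w=u/\phi$ specialized to $\lambda=1$; note that in the regime $\lambda>m(m+1)\ge 2$ of Theorem A the paper's own $\psi$ would be the $\cosh$ branch, so your gauge, which keeps the linear term, is the more convenient one); the cylindrical form $\ddot v=\tfrac14 v+\bigl(\lambda-\tfrac34\bigr)(\operatorname{sech}^{2}t)\,v-v^{5}$ checks out by direct computation; and the linearization about $\lambda^{1/4}\sin\theta$ does give frequency $\sqrt{1+4\lambda}$, with the even Dirichlet modes appearing exactly at $\lambda=k(k+1)$, which matches the threshold $\lambda>m(m+1)$.

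Nevertheless, as a proof the proposal has a genuine gap: everything beyond the linearized mode count is announced rather than carried out, and those are precisely the hard steps. (i) Your intermediate-value argument requires the spike count $N(a)$ to be well defined, to change by exactly one at identifiable transition values of the shooting parameter, and to vary monotonically away from the equilibrium $\lambda^{1/4}$; no mechanism is supplied, and $p=5$ is exactly the borderline exponent at which time-map and oscillation-monotonicity arguments are most delicate --- you acknowledge this yourself (``this is where the real work lies''), which is an admission that the core of the theorem is not proved. (ii) The ``energy confinement'' invoked for (c) does not follow from a conserved Hamiltonian: in the $v$-equation the potential depends on $t$ through $\operatorname{sech}^{2}t$, so energy is only asymptotically conserved, and keeping the orbit inside the well (and obtaining the \emph{strict} inequality $\tilde u_{k}(\pi/2)<\lambda^{1/4}$) needs a genuine Lyapunov or monotonicity estimate, not stated. (iii) Property (a), ``exactly $k$ local maxima,'' requires excluding additional small oscillations near the poles, where the coefficient $\sin^{-4}\theta$ is singular; nothing in the sketch controls this. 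Finally, there is a boundary-condition mismatch: Theorem A prescribes a common nonnegative pole value $\tilde u_{k}(-\pi/2)=\tilde u_{k}(\pi/2)\ge 0$, whereas your condition $w(0)=w(\pi)=0$ is automatic for $w=U\sin\theta$ whenever $U$ is bounded and carries no information, so the shooting target at the poles must be reformulated in terms of the limit of $U$ itself. In short, you have correctly reassembled the known change of variables and the bifurcation arithmetic, but the existence statement --- the actual content of \cite[Theorem 1.6]{BP2006} --- is not established by the proposal.
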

Concretely, as a corollary of Lemma \ref{uniqeven} we obtain:
\begin{corollary}\label{corop}
Assume $-\lambda_1<\lambda\leq 1$, $1<p\leq 5$. Then, the non-negative even function solution of \eqref{eqr} 
is only a constant solution $u\equiv \vert \lambda\vert^{\frac{1}{p-1}}$ (thus, no ``spike'' solution exists).
\end{corollary}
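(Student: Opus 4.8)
The plan is to obtain the corollary as a direct consequence of the forthcoming Lemma \ref{uniqeven}, which in the range $-\lambda_1<\lambda\le 1$, $1<p\le 5$ controls the positive even solutions of \eqref{eqr}. The strategy has two ingredients: exhibit the constant as one such solution, and observe that any ``spike'' profile of the type appearing in Theorem A would be a second, genuinely distinct, even solution.

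First I would check that, for $0<\lambda\le 1$, the constant $u\equiv\vert\lambda\vert^{\frac{1}{p-1}}$ solves \eqref{eqr}: its derivatives vanish identically, so the equation collapses to $-\lambda u+u^{p}=0$, which holds by the choice of the constant, and it is manifestly even and non-negative. Since any even solution forces the odd function $u'$ to vanish at $r=0$, where \eqref{eqr} is regular, the even solutions are precisely the orbits issued from $(u(0),u'(0))=(\alpha,0)$ with $\alpha\ge 0$, and the constant is the orbit with $\alpha=\vert\lambda\vert^{\frac{1}{p-1}}$. (For $-\lambda_1<\lambda\le 0$ no positive constant exists; there the content is that the only non-negative even solution is the trivial one, obtained in the same framework.)

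The heart of the deduction is to invoke Lemma \ref{uniqeven}, which yields that throughout this parameter range \eqref{eqr} admits at most one positive even solution. Combined with the previous step, this forces that solution to be the constant, so that no non-constant positive even solution exists. Any profile $\tilde u_k$ of the type furnished by Theorem A is, however, even by its property (b) and is not the constant, since property (c) gives the strict inequality $\tilde u_k(\pi/2)<\vert\lambda\vert^{\frac{1}{p-1}}$ while the constant attains equality there; hence $\tilde u_k$ would be a second positive even solution, a contradiction. This is strictly stronger than the remark that Theorem A itself requires $\lambda>m(m+1)\ge 2$: the corollary excludes every non-constant positive even solution, by whatever route it might arise, for all $\lambda\le 1$.

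I expect essentially all of the difficulty to sit inside Lemma \ref{uniqeven}; granting its uniqueness conclusion, the corollary is a short argument. The points that must be handled with care are the admissibility of the critical value $p=5$, where the subcritical uniqueness machinery---here, I would anticipate, a shooting or energy argument exploiting the non-increasing monotonicity of $h(t)$ in \eqref{ht}---must still be shown to apply, and the precise notion of solution at the singular endpoints $r=\pm\pi/2$. One must ensure that the constant and the profiles of Theorem A are compared as solutions of one and the same boundary-value problem (with the natural regularity imposed at the poles), so that the uniqueness supplied by Lemma \ref{uniqeven} genuinely applies to both.
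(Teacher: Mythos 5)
Your skeleton (constant is a solution; Lemma \ref{uniqeven} gives uniqueness of even solutions; the profiles of Theorem A would be a second even solution) is the right shape, but there is a genuine gap at its pivot. In the case relevant to the corollary, namely $b=t(\pi/2)$, Lemma \ref{uniqeven} does \emph{not} give unconditional uniqueness of positive even solutions: it asserts uniqueness only among even solutions of \eqref{eqw} satisfying $w(0)\ge\lambda^{\frac{1}{p-1}}$. A hypothetical non-constant, non-negative even solution with $w(0)<\lambda^{\frac{1}{p-1}}$ is simply not excluded by the lemma as stated, so your contradiction does not follow from it. The actual content of the paper's proof of the corollary is exactly the removal of this hypothesis: the assumption $w(0)\ge\lambda^{\frac{1}{p-1}}$ enters the lemma's proof only through \eqref{wrbdd}, i.e.\ the finiteness of $w_t$ at $t(\pi/2)$, and the paper verifies \eqref{wrbdd} directly for an arbitrary non-negative solution. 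Concretely, writing $w_t(t)=u_{r}(r(t))(\cos r(t))\psi(r(t))-u(r(t))\bigl[\psi_{r}(r(t))\cos r(t)+\psi(r(t))\sin r(t)\bigr]$ and noting the bracketed contribution stays bounded, $w_t\to-\infty$ would force $-u_r(r)$ to outgrow $(\cos r)^{-1}$ as $r\to\pi/2$; since $\int^{\pi/2}(\cos r)^{-1}\,dr=\infty$, this drives $u(r)\to-\infty$, contradicting non-negativity. You flag ``the precise notion of solution at the singular endpoints'' as a point needing care, but flagging it is not supplying this blow-up argument, and without it the deduction collapses. (Your other worry — that the constant and the Theorem A profiles be solutions of one and the same boundary-value problem — is in fact resolved automatically by the transformation: $\phi(r)=\psi(r)/\cos r\to\infty$ as $r\to\pm\pi/2$, so every bounded $u$ yields $w(\pm t(\pi/2))=0$, and all candidates compete in \eqref{eqw} with $b=t(\pi/2)$.)

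A secondary inaccuracy: you quote Lemma \ref{uniqeven} as covering ``$-\lambda_1<\lambda\le 1$,'' but its hypothesis is $3/4<\lambda\le 1$. The corollary's wider range requires the additional observation, made explicitly at the start of the paper's proof, that the \emph{proof} of Lemma \ref{uniqeven} (not its statement) also goes through for $-\lambda_{1}<\lambda\leq 3/4$; your write-up silently assumes this extension. By contrast, your remark that one could instead note Theorem A requires $\lambda>m(m+1)\ge 2$ is correct but, as you say yourself, weaker than what the corollary claims, so it cannot substitute for the missing steps.
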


Before we get into the details, let us sketch the outline of this paper.

First, employing the Poho\v{z}aev type identity, we prove (i), (ii), (iv) and (vii) of Theorem \ref{thmmain}
(We note assertion (ii) follows from Theorem 13 (i)-(b) of \cite{MR3470747}).
We note that there exists a solution that attains the infimum of a Rayleigh quotient.
We call it an even least energy solution. 
When $0<\lambda\leq 1$ and $p>5$, the even least energy solution converges to $0$ 
as $\varepsilon\to0$.
Combining this property with a shooting method, we give a proof of 
(iii) and (vi) of Theorem \ref{thmmain}. 
One of three solutions obtained in (iii) and (vi) of Theorem \ref{thmmain} is 
close to $0$ and other two are close to $\lambda^{1/(p-1)}$.
We note here that $U\equiv \lambda^{1/(p-1)}$ is a constant solution of 
$\Delta_{{\mathbb S}^3}U -\lambda U + U^p=0$.
For the case (v), we show that the Morse index 
of the even least energy solution is greater than $1$ 
in the radial function space.
On the other hand, it is known that the Morse index of 
the (not limited to even) least energy solution is $1$,
and hence the least energy solution must be non-even. 
While the case $p>5$, (vi) of Theorem \ref{thmmain} is obtained 
through evaluating the second variation of the Rayleigh quotient and
using a brow-up-like argument.
\section{Uniqueness results}
In this section, we show the uniqueness results in Theorem \ref{thmmain} (assertions (i), (ii), (iv) and (vii)).
\subsection{Uniqueness of even function solutions}
Let $\tau$ denote a geodesic distance from the North Pole; see Figure \ref{figI}.
Since $(x_{1},x_{2},x_{3},x_{4})\in \Omega_{\varepsilon,\pi-\varepsilon}\subset {\Bbb R}^4$ can be expressed as
\begin{align*}
(x_{1},x_{2},x_{3},x_{4})=\left(\cos\tau,\sin\tau\cos u,\sin\tau\sin u\cos v,\sin\tau\sin u\sin v\right)
\end{align*}
with $(\tau,u,v)$ satisfying $\tau\in(\varepsilon,\pi-\varepsilon)$, $u\in [0,\pi]$, $v\in [0,2\pi)$, 
Riemannian metric tensors of ${\Bbb S}^3$ induced from 
the canonical inner product of ${\Bbb R}^4$
become $g_{\tau\tau}=1, g_{uu}=(\sin\tau)^2, g_{vv}=(\sin\tau)^2(\sin u)^2$, $g_{\tau u}=g_{\tau v}=g_{uv}=0$. 
Hence every positive radial solution, which depends only on $\tau$ of \eqref{BNAN1} satisfies
\begin{figure}[htb]
\begin{center}
  \includegraphics[scale=0.85 ]{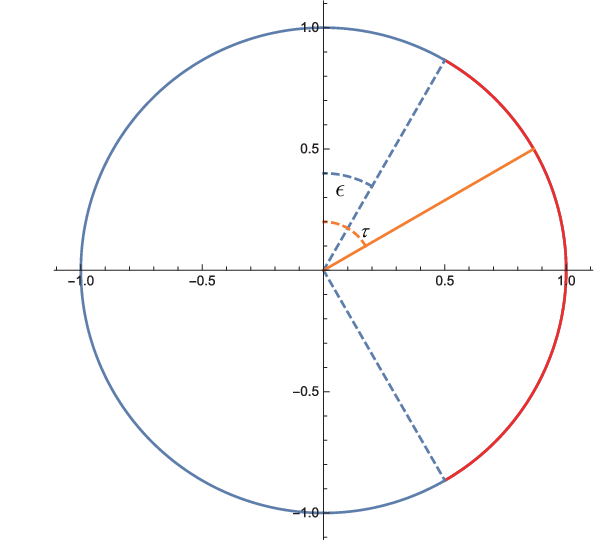}
\caption{The domain $\Omega_{\varepsilon,\pi-\varepsilon}$.}
\label{figI}
\end{center}
\end{figure}
\begin{align}\label{eqtau}
\begin{cases}
u_{\tau\tau}(\tau)+2(\cot \tau)u_{\tau}(\tau)-\lambda u(\tau)+u(\tau)^{p}=0,\,\,\, u(\tau)>0,\,\,\,\,\tau\in \left(\varepsilon,\pi-\varepsilon\right),\\
u\left(\varepsilon\right)=u\left(\pi-\varepsilon\right)=0.
\end{cases}
\end{align}
Set $r=\tau-(\pi/2)$ and $a=(\pi/2)-\varepsilon$, we have from \eqref{eqtau},
\begin{align}\label{eqr}
\begin{cases}
u_{rr}(r)-2(\tan r)u_{r}(r)-\lambda u(r)+u(r)^{p}=0,\,\,\, u(r)>0,\,\,\,\, r\in (-a,a),\\
u(-a)=u(a)=0.
\end{cases}
\end{align} 
Now we define the functions $\psi$ and $\phi$ by
\begin{equation*}
 \psi(r)=\left\{
  \begin{array}{cl}
   \cosh(r\sqrt{\lambda-1}), & \lambda>1, \\[1ex]
   1,& \lambda=1, \\[1ex]
   \cos(r\sqrt{1-\lambda}), & \lambda<1, \\[1ex]
  \end{array}
 \right.
\end{equation*}
and
\begin{equation*}
 \phi(r)=\frac{\psi(r)}{\cos r}.
\end{equation*}
Then $\phi$ is a solution of 
\begin{equation}\label{phidef}
 \phi_{rr} - 2(\tan r)\phi_r- \lambda \phi =0,
\end{equation}
$\phi(r)>0$ for $r\in(-a,a)$ when $\lambda>-\lambda_{1}$ and $\psi$ is a solution of 
\begin{equation*}
 \psi_{rr}(r)+(1-\lambda) \psi(r) = 0,
\end{equation*}
$\psi(r)>0$ for $r\in (-a,a)$ when $\lambda>-\lambda_{1}$.
From \eqref{phidef}, we know that the first eigenvalue $\lambda_{1}$ of $\Delta_{{\Bbb S}^{3}}$ satisfies the relation (hence we obtain \eqref{feigen})
\begin{align*}
a\sqrt{1-\lambda_{1}}=\frac{\pi}{2}.
\end{align*}
Moreover we set
\begin{equation*}
 t(r) = \left\{
  \begin{array}{cl}
   \dfrac{1}{\sqrt{\lambda-1}} \tanh(r\sqrt{\lambda-1}), & \lambda>1, \\[2ex]
   r,& \lambda=1, \\[1ex]
   \dfrac{1}{\sqrt{1-\lambda}} \tan(r\sqrt{1-\lambda}), & \lambda<1, \\[1ex]
  \end{array}
 \right.
\end{equation*}
and let $r(t)$ is the inverse function of $t(r)$.
We note that
\begin{equation*}
 (t(r))_r = \frac{1}{\psi(r)^2} = \frac{1}{(\cos r)^2\phi(r)^2}.
\end{equation*}
Now we set $w(t)=u(r(t))/\phi(r(t))$, that is, $u(r)=w(t(r))\phi(r)$.
Then \eqref{eqr} is rewritten as 
\begin{align}\label{eqw}
\begin{cases}
w_{tt}(t)+(\cos r(t))^{1-p}\psi(r(t))^{p+3}w(t)^{p}=0,\,\,\, w(t)>0,\,\,\,\, r\in (-b,b),\\
w(-b)=w(b)=0,
\end{cases}
\end{align}
where $b:=t(a)$.
In the followings we sometimes use the abbreviation 
\begin{align}\label{ht}
h(t)=(\cos r(t))^{1-p}\psi(r(t))^{p+3}=(\cos r(t))^{1-p}\left(\cos \left(r(t)\sqrt{1-\lambda}\right)\right)^{p+3}
\end{align}
for the sake of simplicity when $\lambda\leq 1$.
We also use the expression
\begin{align}\label{hr}
\bar{h}(r)=(\cos r)^{1-p}\left(\cos \left(r\sqrt{1-\lambda}\right)\right)^{p+3}.
\end{align}
We note that the differential equation of \eqref{eqw} has 
an exact solution.
\begin{lem}\label{lemcosr}
 The function 
 $\lambda^{1/(p-1)}/\phi(r(t))=\lambda^{1/(p-1)}\cos r(t)/\psi(r(t))$ 
 is a solution of
 \begin{equation*}
  w_{tt}(t)+(\cos r(t))^{1-p}\psi(r(t))^{p+3}w(t)^{p}=0.
 \end{equation*}
\end{lem}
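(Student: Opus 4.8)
The plan is to substitute the candidate function directly into the differential equation and verify that the result vanishes identically, the whole computation hinging on the linear equation $\psi_{rr}+(1-\lambda)\psi=0$ satisfied by $\psi$ together with the identity $(t(r))_r=1/\psi(r)^2$. Write $c=\lambda^{1/(p-1)}$ and $w(t)=c\cos r(t)/\psi(r(t))$; all computations below are understood at the point $r=r(t)$, whose argument I suppress.

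First I would differentiate in $t$. Since $dt/dr=1/\psi^2$, the chain rule gives $dr/dt=\psi^2$, so $w_t=\psi^2 w_r$. A short quotient-rule calculation yields $w_r=c(-\sin r\,\psi-\cos r\,\psi_r)/\psi^2$, whence $w_t=c(-\sin r\,\psi-\cos r\,\psi_r)$. Differentiating once more in $r$ and multiplying by $\psi^2$, the cross terms $\pm\sin r\,\psi_r$ cancel and one is left with $w_{tt}=-c\,\psi^2\cos r\,(\psi+\psi_{rr})$. Here the key simplification enters: because $\psi_{rr}=(\lambda-1)\psi$, we have $\psi+\psi_{rr}=\lambda\psi$, so $w_{tt}=-c\,\lambda\cos r\,\psi^3$.

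Next I would simplify the nonlinear term. With $h(t)=(\cos r)^{1-p}\psi^{p+3}$ as in \eqref{ht} and $w^p=c^p(\cos r)^p/\psi^p$, the powers collapse: $h(t)w^p=c^p(\cos r)^{1-p+p}\psi^{p+3-p}=c^p\cos r\,\psi^3$. Adding the two contributions gives
\begin{equation*}
w_{tt}+h(t)w^p=\cos r\,\psi^3\,(c^p-\lambda c),
\end{equation*}
which vanishes precisely because $c^{p-1}=\lambda$, i.e.\ because $c=\lambda^{1/(p-1)}$ was chosen as the positive root. This establishes the claim.

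Conceptually this is no accident: the constant $U\equiv\lambda^{1/(p-1)}$ solves $\Delta_{\mathbb{S}^3}U-\lambda U+U^p=0$, hence $u\equiv\lambda^{1/(p-1)}$ solves the ODE in \eqref{eqr}, and since the substitution $u(r)=w(t(r))\phi(r)$ is exactly the one transforming \eqref{eqr} into \eqref{eqw}, its image $w=u/\phi=\lambda^{1/(p-1)}\cos r/\psi$ must solve the ODE in \eqref{eqw}; one could present this as the proof and relegate the substitution above to a verification. I expect no genuine obstacle here. The only points demanding care are using $dr/dt=\psi^2$ (rather than its reciprocal) when passing derivatives through the change of variables, and tracking the exponents of $\cos r$ and $\psi$ so that they telescope as claimed.
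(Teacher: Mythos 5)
Your verification is correct, and it matches the paper exactly: the paper states Lemma \ref{lemcosr} without proof, treating it as the immediate observation that the constant solution $u\equiv\lambda^{1/(p-1)}$ of the equation in \eqref{eqr} is carried by the substitution $u(r)=w(t(r))\phi(r)$ into the exact solution $\lambda^{1/(p-1)}/\phi(r(t))$ of the equation in \eqref{eqw}, which is precisely your concluding conceptual argument. Your direct computation via $dr/dt=\psi^2$ and $\psi_{rr}=(\lambda-1)\psi$ is the correct fleshed-out verification, with the exponents telescoping as you claim.
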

We consider the following initial value problem associated with \eqref{eqw}:
\begin{align}\label{initw}
\begin{cases}
w_{tt}(t)+\left(\cos r(t)\right)^{1-p} \psi(r(t))^{p+3} \left| w(t)\right|^{p-1}w(t)=0,\,\,\,\,t\in (0,b),\\
w(0)=\alpha,\,w_{t}(0)=0,
\end{cases}
\end{align}
where $\alpha>0$. 
We denote by $w(r;\alpha)$ a solution of \eqref{initw}
and denote by $Z(\alpha)\in (0,b]$ the first zero in $(0,b]$ of \eqref{initw}, if it exists. 
Since $h\in C^{\infty}[0,b)$, from \cite{Hartman}, the solution $w(t;\alpha)$ of \eqref{initw}
is unique and $w(t;\alpha)$ is $C^{1}$ functions on the set $[0,b)\times (0,\infty)$. 
Hence $Z$ is a continuous function of $\alpha$, if $\alpha$ satisfies $Z(\alpha)<b$.

Since Theorem 1 assumes 
\begin{align}\label{lambdaas}
-\lambda_{1}<\lambda\leq 1, 
\end{align}
from now on we assume \eqref{lambdaas}. 

Now we obtain the following limit property of $Z(\alpha)$ for large $\alpha$:
\begin{lem}\label{zzero}
For all sufficiently large $\alpha>0$, there exists $Z(\alpha)$ and it satisfies
\begin{align*}
\lim_{\alpha\rightarrow\infty}Z(\alpha)=0.
\end{align*}
\end{lem}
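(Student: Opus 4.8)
The plan is to use a scaling argument that exploits the homogeneity of the nonlinearity. Since the coefficient $h(t)=(\cos r(t))^{1-p}\psi(r(t))^{p+3}$ is continuous on $[0,b)$ with $r(0)=0$, $\cos r(0)=1$ and $\psi(0)=1$, we have $h(0)=1$. Motivated by the balance $w_{tt}\sim h(t)w^p$, I would introduce the rescaled variable $s=\alpha^{(p-1)/2}t$ and the rescaled function $v^\alpha(s)=w(\alpha^{-(p-1)/2}s;\alpha)/\alpha$. A direct computation turns \eqref{initw} into
\begin{equation*}
v^\alpha_{ss}(s)+h\left(\alpha^{-(p-1)/2}s\right)\left|v^\alpha(s)\right|^{p-1}v^\alpha(s)=0,\qquad v^\alpha(0)=1,\quad v^\alpha_s(0)=0.
\end{equation*}
The point of this rescaling is that a first zero of $v^\alpha$ at some $s=s^\alpha$ corresponds to a first zero of $w(\cdot;\alpha)$ at $Z(\alpha)=\alpha^{-(p-1)/2}s^\alpha$, so it suffices to show that $s^\alpha$ stays bounded as $\alpha\to\infty$.

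Next, I would identify the limiting problem. As $\alpha\to\infty$, for $s$ in any fixed compact interval the argument $\alpha^{-(p-1)/2}s\to 0$ uniformly, and by continuity $h(\alpha^{-(p-1)/2}s)\to h(0)=1$. Hence the formal limit is the autonomous equation
\begin{equation*}
v_{ss}+\left|v\right|^{p-1}v=0,\qquad v(0)=1,\quad v_s(0)=0,
\end{equation*}
whose energy $\tfrac12 v_s^2+\tfrac1{p+1}|v|^{p+1}\equiv \tfrac1{p+1}$ is conserved. This solution decreases from $1$ and reaches its first zero at
\begin{equation*}
s_0=\int_0^1\frac{dv}{\sqrt{\tfrac{2}{p+1}\left(1-v^{p+1}\right)}}\in(0,\infty),
\end{equation*}
and there $v_s(s_0)=-\sqrt{2/(p+1)}\neq0$, so the first zero is simple.

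Finally, I would invoke continuous dependence on parameters. Because $h(\alpha^{-(p-1)/2}\,\cdot\,)\to 1$ uniformly on $[0,s_0+\delta]$ and the limiting solution exists and stays bounded there, standard ODE theory gives that $v^\alpha\to v$ uniformly on $[0,s_0+\delta]$ for large $\alpha$; since the limiting zero at $s_0$ is simple, the first zero $s^\alpha$ of $v^\alpha$ exists for all large $\alpha$ and satisfies $s^\alpha\to s_0$. Consequently $Z(\alpha)=\alpha^{-(p-1)/2}s^\alpha$ exists for all large $\alpha$ (in particular $Z(\alpha)<b$ eventually) and tends to $0$ as $\alpha\to\infty$.

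The main obstacle is making the convergence $v^\alpha\to v$ rigorous on a fixed interval containing $s_0$: one must rule out blow-up of $v^\alpha$ before $s_0$ and control the non-autonomous perturbation coming from the $s$-dependence of $h(\alpha^{-(p-1)/2}s)$. I expect to handle this by an energy estimate showing that $\tfrac12(v^\alpha_s)^2+\tfrac1{p+1}h(\alpha^{-(p-1)/2}s)|v^\alpha|^{p+1}$ is nearly conserved—its $s$-derivative carries the small factor $\alpha^{-(p-1)/2}$—which yields a uniform a priori bound on $|v^\alpha|$ and $|v^\alpha_s|$ on compact intervals, after which the passage to the limit and the convergence of the simple first zero are routine.
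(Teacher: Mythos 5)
Your proof is correct, but it takes a genuinely different route from the paper's. The paper's own proof (Appendix A, modeled on Lemmas 2.4 and 2.5 of Tanaka's paper cited there) is a short concavity argument by contradiction: fixing $\delta\in(0,t(\pi/4))$ and assuming $w(t;\alpha)>0$ on $[0,\delta]$, the equation gives $w_{tt}<0$, hence the chord bound $w(t;\alpha)\ge\frac{\alpha}{\delta}(\delta-t)$; integrating the ODE twice and using the pointwise lower bound $h\ge h_0:=\left(\cos\left(\pi\sqrt{1-\lambda}/4\right)\right)^{p+3}$ on $[0,\delta]$ yields $\alpha\ge h_0\alpha^{p}\delta^{2}/(p+2)$, impossible once $\alpha^{p-1}>(p+2)h_0^{-1}\delta^{-2}$. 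That gives an explicit largeness threshold and $Z(\alpha)\le\delta$ directly, with no compactness, blow-up control, or continuous-dependence input. Your rescaling $v^{\alpha}(s)=\alpha^{-1}w\left(\alpha^{-(p-1)/2}s;\alpha\right)$ to the autonomous Emden equation $v_{ss}+|v|^{p-1}v=0$ is correct as well: the computation of the rescaled equation and data is right, $h(0)=1$ holds, the limit solution reaches its first zero $s_0=\int_0^1\left(\frac{2}{p+1}\left(1-v^{p+1}\right)\right)^{-1/2}dv<\infty$ (the integrand is $O\left((1-v)^{-1/2}\right)$ at $v=1$) transversally with $v_s(s_0)=-\sqrt{2/(p+1)}$, and your energy device does close the one gap you flag, since the $s$-derivative of $\frac12 (v^{\alpha}_s)^2+\frac{1}{p+1}h\left(\alpha^{-(p-1)/2}s\right)|v^{\alpha}|^{p+1}$ equals $\frac{\alpha^{-(p-1)/2}}{p+1}h'\left(\alpha^{-(p-1)/2}s\right)|v^{\alpha}|^{p+1}$ with $h'$ bounded near $0$ (recall $h\in C^{\infty}[0,b)$), so Gronwall gives uniform $C^1$ bounds, then $C^1$ convergence on $[0,s_0+\delta]$, and the simple zero perturbs stably. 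What your heavier route buys is the sharp asymptotic rate $Z(\alpha)=(s_0+o(1))\,\alpha^{-(p-1)/2}$ with an explicit limit profile, which the paper's two-line estimate does not provide; moreover your normalization $h(0)=1$ is valid for every $\lambda$, so your argument does not even use the standing assumption $\lambda\le1$ that enters the paper's choice of $h_0$. The paper's approach, in exchange, is entirely elementary and quantitative in $\alpha$.
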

\begin{proof}
Although this lemma can be proved almost the same as Lemmas 2.4 and 2.5 of \cite{Tanaka2013}, we give its proof in Appendix A for the sake of convenience. 
\end{proof}
Here, we prove that 
problem \eqref{eqw} always has at least one positive solution $w\in C^{2}[-a,a]$.
\begin{lem}\label{exmin}
Let $p>1$
and $H^{1}_{0}(-b,b)$ be a Sobolev space whose elements vanish at $t=\pm b$, $(0<b<t(\pi/2))$. 
Then, a positive solution $w\in C^{2}[-b,b]$ of \eqref{eqw} always exists and it is a minimizer of the Rayleigh quotient
\begin{align}\label{Reyleigh-ne}
\inf_{\{w\in H^{1}_{0}(-b,b)\,\vert\,w\not\equiv 0\}}R(w):=
\inf_{\{w\in H^{1}_{0}(-b,b)\,\vert\,w\not\equiv 0\}}\frac{\int_{-b}^{b}w_{t}(t)^2dt}{\left(\int_{-b}^{b}
\left(\cos r(t)\right)^{1-p}\psi(r(t))^{p+3}\vert w(t)\vert ^{p+1}dt\right)^{\frac{2}{p+1}}}.
\end{align} 
Also, there exists a positive even function solution $w\in C^{2}[-b,b]$ of \eqref{eqw} which is a minimizer of
\begin{align}\label{Reyleigh}
E(b):=\inf_{\{w\in H^{1}_{0}(-b,b)\,\vert\,w\not\equiv 0, w:\text{even}\}}R(w).
\end{align} 
\end{lem}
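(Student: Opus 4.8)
The plan is to produce both minimizers by the \emph{direct method} in the calculus of variations, using the scale invariance of $R$ to pass to a constrained problem, one-dimensional compactness to pass to the limit, and the Euler--Lagrange equation to recover \eqref{eqw}. First I would record the properties of the weight. Since $0<b<t(\pi/2)$, the inverse map $r(t)$ sends $[-b,b]$ into $[-a,a]\subset(-\pi/2,\pi/2)$, so $\cos r(t)>0$; and under the standing hypothesis $\lambda>-\lambda_1$ one has $\psi(r(t))>0$ on $[-b,b]$ as well. Hence the weight $h(t)=(\cos r(t))^{1-p}\psi(r(t))^{p+3}$ from \eqref{ht} is continuous and bounded between two positive constants on $[-b,b]$. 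Moreover $t(r)$ is odd and $\psi$ is even, so $r(t)$ is odd while $\cos r(t)$ and $\psi(r(t))$ are even; consequently $h$ is even, a fact I will use for the symmetric problem.

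Because $R$ is invariant under $w\mapsto cw$ with $c\neq0$, minimizing $R$ over $H^1_0(-b,b)$ is the same as minimizing the Dirichlet energy $\int_{-b}^{b}w_t^2\,dt$ over the constraint set $\{w\in H^1_0(-b,b):\int_{-b}^{b}h\,|w|^{p+1}\,dt=1\}$. I would take a minimizing sequence $\{w_n\}$ there; it is bounded in $H^1_0(-b,b)$ (the energy is bounded and Poincar\'e's inequality controls the $L^2$ norm), so along a subsequence $w_n\rightharpoonup w$ weakly in $H^1_0(-b,b)$. In one dimension $H^1_0(-b,b)$ embeds compactly into $C[-b,b]$, hence into $L^{p+1}(-b,b)$, so $\int_{-b}^{b}h\,|w_n|^{p+1}\,dt\to\int_{-b}^{b}h\,|w|^{p+1}\,dt=1$; in particular $w\not\equiv0$ and the constraint survives. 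Weak lower semicontinuity of the Dirichlet energy then gives $\int_{-b}^{b}w_t^2\,dt\le\liminf_n\int_{-b}^{b}(w_n)_t^2\,dt$, so $w$ is a minimizer. Replacing $w$ by $|w|$ changes neither integral, so I may assume $w\ge0$.

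The minimizer satisfies the Euler--Lagrange equation $-w_{tt}=\mu\,h\,|w|^{p-1}w$, where the multiplier $\mu$ equals the minimal energy; testing against $w$ shows $\mu=\int_{-b}^{b}w_t^2\,dt>0$, since $\mu=0$ would force $w$ constant and hence $w\equiv0$ by the boundary conditions. Rescaling $w\mapsto\mu^{1/(p-1)}w$ converts the equation into that of \eqref{eqw} while preserving minimality of $R$. Since $w\ge0$, $w\not\equiv0$ and $w_{tt}=-h\,w^p\le0$, the function $w$ is concave and vanishes at $\pm b$, so $w>0$ on $(-b,b)$; a boundary zero with vanishing slope would contradict uniqueness for the initial value problem. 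Finally $w\in C[-b,b]$ makes $-h\,w^p$ continuous, so $w\in C^2[-b,b]$, as claimed.

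For the even minimizer of $E(b)$ I would repeat the argument verbatim inside the closed subspace of even functions of $H^1_0(-b,b)$, noting that weak limits of even functions are even. That the even minimizer actually solves \eqref{eqw} on all of $(-b,b)$ follows from the evenness of $h$: the vanishing of the first variation against even test functions yields the ODE on $[0,b]$ together with $w_t(0)=0$, and the even extension is then a $C^2$ solution on $(-b,b)$ (equivalently, one may appeal to the principle of symmetric criticality). The only step demanding genuine care is the persistence of the constraint in the limit, i.e.\ the absence of concentration. Here the one-dimensional setting is decisive: for every $p>1$ the embedding $H^1_0(-b,b)\hookrightarrow C[-b,b]$ is compact, so there is no critical-exponent loss of compactness as in the higher-dimensional Brezis--Nirenberg problem, and strong $L^{p+1}$ convergence comes for free. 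I therefore expect this step to be routine rather than the real difficulty.
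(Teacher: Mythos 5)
Your proposal is correct and follows essentially the same route as the paper's proof: the direct method on the normalized constraint set $\int_{-b}^{b} h\,|w|^{p+1}\,dt=1$ using the compact one-dimensional Sobolev embedding, weak lower semicontinuity, passage to $|w|$, a positive Lagrange multiplier removed by the rescaling $w\mapsto \mu^{1/(p-1)}w$, and strict positivity of the minimizer (the paper uses uniqueness for the initial value problem at an interior zero where your concavity argument also works). Your additional care about the even case --- evenness of $h$, the natural condition $w_t(0)=0$, and symmetric criticality --- merely makes explicit what the paper dismisses with ``shown in a similar way.''
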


We call $w\in C^{2}[-b,b]$ a {\it least energy solution} of \eqref{eqw} if
$w$ is a solution of \eqref{eqw} and a minimizer of the Rayleigh quotient 
\eqref{Reyleigh-ne}.
We also call $w\in C^{2}[-b,b]$ an {\it even least energy solution} of 
\eqref{eqw} if $w$ is a solution of \eqref{eqw} and a minimizer of 
\eqref{Reyleigh}.

\begin{proof}[Proof of Lemma \ref{exmin}.]
We show the minimizer of $R$ exists for the case of \eqref{Reyleigh-ne}. Since we consider a minimization problem, we can consider the infimum of $R$ on the set
\begin{align*}
B_{K}=\left\{w\in H^{1}_{0}(-b,b)\,\vert\,\Vert w\Vert_{H^{1}_{0}(-b,b)}\leq K \text{ and }
\int_{-b}^{b}\left(\cos r(t)\right)^{1-p}\psi(r(t))^{p+3}\vert w(t)\vert ^{p+1}dt=1\right\}
\end{align*}
for $K>0$ is large enough. Since the Sobolev embedding $H^{1}_{0}(-b,b)\subset L^{\infty}(-b,b)$ is compact, $B_{K}$ is a weak compact set. 
Hence, from the weak lower-semi-continuity of $\Vert w\Vert_{H^{1}_{0}(-b,b)}^{2}$, a minimizer $\tilde{w}$ of $R$ exists.  
Noting $\Vert \tilde{w}\Vert_{H^{1}_{0}(-b,b)}=\Vert \vert \tilde{w}\vert\Vert_{H^{1}_{0}(-b,b)}$,
we can assume the minimizer is non-negative. 
Hence, there exists a Lagrange multiplier $\mu>0$ such that $\tilde{w}$ satisfies
\begin{align*}
\begin{cases}
-\tilde{w}_{tt}(r)=\mu \left(\cos r(t)\right)^{1-p}\psi(r(t))^{p+3}\tilde{w}(t)^{p},\,\,\,\,r\in (-b,b),\\
\tilde{w}(-b)=\tilde{w}(b)=0.
\end{cases}
\end{align*}
By taking $w=\mu^{1/(p-1)}\tilde{w}$, we see that $w$ is also a minimizer of $R$ and satisfies \eqref{eqw}. 
Assume $w(r_{0})=0$ for some $r_{0}\in (-b,b)$. Then, from the non-negativity of $w$, $w_{r}(r_{0})=0$.
However, from the uniqueness of initial value problem, we obtain $w\equiv 0$ on $[-b,b]$. This contradicts to
\begin{multline*}
 \int_{-b}^{b}\left(\cos r(t)\right)^{1-p}\psi(r(t))^{p+3}\vert w(t)\vert ^{p+1}dt \\ 
=\mu^{\frac{p+1}{p-1}}\int_{-b}^{b}\left(\cos r(t)\right)^{1-p}\psi(r(t))^{p+3}\vert \tilde{w}(t)\vert^{p+1}dt=\mu^{\frac{p+1}{p-1}}>0.
\end{multline*}
Hence $w$ is positive in $(-b,b)$. The assertion for \eqref{Reyleigh} is shown in a similar way.
\end{proof}
First, we show the assertion (vii) of Theorem \ref{thmmain}.
\begin{lem}\label{uniqeven}
Assume $3/4<\lambda\leq 1$ and $1<p\leq 5$. Then, for $0<b<t(\pi/2)$, an even function solution of \eqref{eqw} is unique.
While in the case $b=t(\pi/2)$, an even function solution of \eqref{eqw} satisfying $w(0)\geq \lambda^{\frac{1}{p-1}}$ is unique and it is $w(t)=\lambda^{\frac{1}{p-1}}(\cos r(t))/\cos(\sqrt{1-\lambda}r(t))$.
\end{lem}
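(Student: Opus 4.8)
My plan is to reduce the statement to the injectivity of a time map and to exploit the explicit solution of Lemma \ref{lemcosr}. An even solution of \eqref{eqw} is precisely a solution $w(\cdot;\alpha)$ of the initial value problem \eqref{initw} whose first zero $Z(\alpha)$ equals $b$, so uniqueness of even solutions is equivalent to injectivity of $\alpha\mapsto Z(\alpha)$ on the admissible amplitudes. Write $W_*(t):=\lambda^{1/(p-1)}\cos r(t)/\psi(r(t))$ for the solution in Lemma \ref{lemcosr}, so that $W_*(0)=\lambda^{1/(p-1)}$ and $W_*(t)\to0$ as $t\to t(\pi/2)$. Substituting $w=W_*z$ into \eqref{eqw} and using the identity $h(t)W_*(t)^{p-1}=\lambda\,\psi(r(t))^4$, with $h$ as in \eqref{ht}, turns the equation into the self-adjoint form $(W_*^2z_t)_t+\lambda\psi^4W_*^2(z^p-z)=0$, whose constant solution $z\equiv1$ is exactly $W_*$.

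The factorisation $(W_*^2z_t)_t=\lambda\psi^4W_*^2\,z(z^{p-1}-1)$ settles the easy directions. For an even solution $z_t(0)=0$; if $z(0)\le1$ then $W_*^2z_t$ is nondecreasing, so $z$ never decreases, and $w=W_*z$ has no interior zero and cannot meet the boundary condition when $b<t(\pi/2)$. Hence every even competitor has $\alpha=w(0)>\lambda^{1/(p-1)}$, i.e. $z(0)>1$, and such a $z$ is strictly decreasing up to its first zero. Both assertions therefore follow once I show that $Z$ is strictly decreasing on $(\lambda^{1/(p-1)},\infty)$, with $Z(\alpha)\to t(\pi/2)$ as $\alpha\downarrow\lambda^{1/(p-1)}$ and $Z(\alpha)\to0$ as $\alpha\to\infty$ (the latter being Lemma \ref{zzero}): for $b<t(\pi/2)$ this gives a unique admissible $\alpha$, while for $b=t(\pi/2)$ it forces $\alpha=\lambda^{1/(p-1)}$, i.e. $w=W_*$, among all even solutions with $w(0)\ge\lambda^{1/(p-1)}$.

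To analyse monotonicity I differentiate \eqref{initw} in $\alpha$: the variation $\varphi=\partial_\alpha w$ solves $\varphi_{tt}+p\,h(t)w^{p-1}\varphi=0$ with $\varphi(0)=1$, $\varphi_t(0)=0$, and implicit differentiation of $w(Z(\alpha);\alpha)=0$ shows that $Z'(\alpha)$ has the sign of $\varphi(Z(\alpha))$, since $w_t(Z(\alpha))<0$; thus I must prove $\varphi(Z(\alpha))<0$. The limiting amplitude $\alpha=\lambda^{1/(p-1)}$ (i.e. $w=W_*$) is handled explicitly: there $p\,h\,W_*^{p-1}=p\lambda\psi^4$, so $\varphi_{tt}+p\lambda\psi^4\varphi=0$; rewritten in $r$ and rescaled by $\sigma=\sqrt{1-\lambda}\,r$ this takes the form $\varphi_{\sigma\sigma}-2(\tan\sigma)\varphi_\sigma+\tfrac{p\lambda}{1-\lambda}\varphi=0$ of the same type as \eqref{phidef}, and the substitution $\varphi=\Psi/\cos\sigma$ (the analogue of $\phi=\psi/\cos r$) reduces it to $\Psi_{\sigma\sigma}+\omega^2\Psi=0$ with $\omega^2=(1+(p-1)\lambda)/(1-\lambda)$. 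Hence $\varphi=\cos(\sqrt{1+(p-1)\lambda}\,r(t))/\psi(r(t))$, and at $r=\pi/2$
\[ \varphi(t(\pi/2))=\frac{\cos\!\big(\tfrac{\pi}{2}\sqrt{1+(p-1)\lambda}\big)}{\psi(\pi/2)}<0, \]
because $1<\sqrt{1+(p-1)\lambda}<3$ whenever $1<p\le5$ and $\lambda\le1$, so that $0<(p-1)\lambda\le4$ (for $\lambda=1$ one argues directly with $\psi\equiv1$). This gives $Z'(\lambda^{1/(p-1)})<0$, exactly the information needed at the endpoint $b=t(\pi/2)$.

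The remaining, and hardest, step is to promote this to $\varphi(Z(\alpha))<0$ for every $\alpha>\lambda^{1/(p-1)}$, where $p\,h\,w^{p-1}$ is no longer explicit and, a priori, two distinct even solutions could cross. Here I would invoke the Poho\v{z}aev-type identity used for (i), (ii), (iv): introduce the generator $\eta=t\,w_t+\tfrac{2}{p-1}w$, for which $\eta_{tt}+p\,h\,w^{p-1}\eta=-t\,h'(t)w^p$, and integrate $(\varphi\eta_t-\eta\varphi_t)_t=-t\,h'(t)w^p\varphi$ over $(0,Z(\alpha))$. Eliminating $\varphi_t(Z(\alpha))$ by the companion identity coming from $w_t$, which satisfies the same operator with forcing $-h'(t)w^p$, yields
\[ \frac{p+1}{p-1}\,w_t(Z(\alpha))\,\varphi(Z(\alpha))=Z(\alpha)h(0)\alpha^p+\int_0^{Z(\alpha)}\big(Z(\alpha)-t\big)h'(t)w^p\varphi\,dt. \]
The boundary term is positive, so the whole difficulty is to control the sign of the weighted integral. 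I expect to recast it through $\bar h$ in \eqref{hr}, whose logarithmic derivative is $(p-1)\tan r-(p+3)\sqrt{1-\lambda}\tan(\sqrt{1-\lambda}\,r)$, and to use $1<p\le5$ (so that $p+1$ does not exceed the critical Sobolev exponent $6$) together with $3/4<\lambda\le1$ to force the required sign and, via a comparison argument, to exclude crossings of two putative even solutions. This last sign analysis is where I expect the genuine work to lie.
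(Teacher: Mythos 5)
Your reduction to injectivity of the time map, and the endpoint computation along the exact solution, are in themselves sound (the variational solution along $W_*$ is indeed $\varphi(t)=\cos\left(\sqrt{1+(p-1)\lambda}\,r(t)\right)/\psi(r(t))$, and $\cos\left(\frac{\pi}{2}\sqrt{1+(p-1)\lambda}\right)<0$ for $1<p\le 5$, $0<\lambda\le1$; your Wronskian identity with $\eta=t\,w_t+\frac{2}{p-1}w$ also checks out). But the proposal has two genuine gaps. First, the ``easy direction'' is wrong. After the substitution $w=W_*z$ the correct identity is $(W_*^2z_t)_t=-\lambda\psi^4W_*^2\,z\,(z^{p-1}-1)$ (your factorisation line drops the minus sign), and it controls $z$ only \emph{while} $z\le1$: starting from $z(0)<1$ the function $z$ increases, but nothing prevents it from crossing $1$, after which the forcing changes sign and $z$ can turn around and descend to $0$ strictly before $t(\pi/2)$. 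This is not a hypothetical scenario: it must occur whenever small-amplitude even solutions exist, and the paper itself constructs such solutions for $p>5$ — the even least energy solutions $W_\varepsilon$ of \eqref{Reyleigh} satisfy $W_\varepsilon(0)\to0$ as $\varepsilon\to0$, and the shooting amplitudes $\gamma_2,\gamma_3$ in the proof of multiple existence are strictly below $\lambda^{1/(p-1)}$ with first zero $b<t(\pi/2)$. Since your argument that ``every even competitor has $w(0)>\lambda^{1/(p-1)}$'' uses neither $1<p\le5$ nor $3/4<\lambda\le1$, it would prove a statement that is false for $p>5$; excluding amplitudes $\le\lambda^{1/(p-1)}$ is part of the \emph{content} of the uniqueness lemma (in the paper it emerges only as a consequence of uniqueness combined with the limits of $Z(\alpha)$), not an elementary preliminary.

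Second, the heart of the proof is left as a hope. To get $Z'(\alpha)<0$ for \emph{every} $\alpha>\lambda^{1/(p-1)}$ from your identity, you must control the sign of $\int_0^{Z(\alpha)}\left(Z(\alpha)-t\right)h_t(t)\,w^p\varphi\,dt$; but $h_t$ changes sign on $(0,b)$ in the relevant parameter range (the logarithmic derivative of $\bar h$ in \eqref{hr} is $(p-1)\tan r-(p+3)\sqrt{1-\lambda}\tan\left(\sqrt{1-\lambda}\,r\right)$, which vanishes inside $(0,\pi/2)$), and the sign of $\varphi$ on $(0,Z(\alpha))$ is exactly the unknown — a circularity your sketch does not break. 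That sign analysis is precisely where the paper's proof does its work, by a different mechanism: it compares two putative even solutions directly (intersection exactly once, monotonicity of $w_1/w_2$) and uses the Poho\v{z}aev function $J$ of \eqref{J} with $a(t)=b-t$, $b(t)=1/2$, $c(t)=0$, the negativity of the multiplier being the estimate \eqref{Hneg}, proved by a page of trigonometric inequalities in which $1<p\le5$ enters essentially (e.g.\ through the factors $p-5$). Note also that at the singular endpoint $b=t(\pi/2)$, where $h$ blows up, the finiteness of $w_t$ at $t(\pi/2)$ — needed for your implicit-function computation of $Z'$ just as much as for the paper's boundary term — is itself nontrivial and is established separately in the paper as \eqref{wrbdd}. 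As it stands, your proposal is a plan whose two load-bearing steps are, respectively, false as argued and unproven.
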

\begin{proof}
We note that if $0<b<(\pi/2)$, an even function solution of \eqref{eqw} always exists from Lemma \ref{exmin}. To prove the uniqueness of even function solutions, we use a Poho\v{z}aev type function of \cite{SW2020-1,SW2020-2}. 
Assume $0<b<t(\pi/2)$ and there exist two distinct solutions $w(t;\alpha_{1})$ and $w(t;\alpha_{2})$ of \eqref{initw}
satisfying $Z(\alpha_{1})=Z(\alpha_{2})=b$, where $0<\alpha_{1}<\alpha_{2}$. 
Since by Lemma \ref{zzero}, $Z(\alpha)\rightarrow 0$ as $\alpha\rightarrow\infty$, we can assume 
(i) $w(t;\alpha_{1})$ and $w(t;\alpha_{2})$ intersect exactly once on $(0,b)$ or (ii) $w(t;\alpha_{1})$ and $w(t;\alpha_{2})$ does not intersect on $(0,b)$ (see Proposition 1.1 and Lemma 1.2 of Kabeya and Tanaka \cite{Kabeya-Tanaka1999}, see also Lemma 1 of Shioji and Watanabe \cite{SW2020-1}). 
We show that (ii) cannot happen. 
To the contrary, we assume (ii). Thus, $w(t;\alpha_{2})>w(t;\alpha_{1})>0$ for $t\in [0,b)$.
By integrating the following identity 
\begin{multline*}
\left[w_{t}(t;\alpha_{2})w(t;\alpha_{1})-w_{t}(t;\alpha_{1})w(t;\alpha_{2})\right]_{t}\\
=h(t)w(t;\alpha_{2})w(t;\alpha_{1})\left(-w(t;\alpha_{2})^{p-1}+w(t;\alpha_{1})^{p-1}\right)
\end{multline*}
on $(0,b)$, we see that the left-hand-side is zero, while the right-hand-side is negative, which is a contradiction.

Next, we consider the case $b=t(\pi/2)$. Assume that there exists $\alpha^{*}>\lambda^{\frac{1}{p-1}}$ such that $Z(\alpha^*)=t(\pi/2)$. Then, again the solution
$w(t;\lambda^{\frac{1}{p-1}})=\lambda^{\frac{1}{p-1}}(\cos r(t))/\cos(\sqrt{1-\lambda}r(t))$ and $w(t;\alpha^{*})$ satisfies the following: (i) intersect once on $(0,t(\pi/2))$ or (ii) they do not intersect each other on $(0,t(\pi/2))$. 
We claim that (ii) does not occur. To the contrary, we assume (ii).
Integrating the following identity  
\begin{multline*}
\left[w_{t}(t;\alpha^{*})w(t;\lambda^{\frac{1}{p-1}})-w_{t}(t;\lambda^{\frac{1}{p-1}})w(t;\alpha^{*})\right]_{t}\\
=h(t)w(t;\alpha^{*})w(t;\lambda^{\frac{1}{p-1}})\left(-w(t;\alpha^{*})^{p-1}+w(t;\lambda^{\frac{1}{p-1}})^{p-1}\right)
\end{multline*}
on $(0,t)$, we obtain
\begin{multline}\label{wint2}
w_{t}(t;\alpha^{*})w(t;\lambda^{\frac{1}{p-1}})-w_{t}(t;\lambda^{\frac{1}{p-1}})w(t;\alpha^{*}) \\
=\int_{0}^{t} h(s)w(s;\alpha^{*})w(s;\lambda^{\frac{1}{p-1}})\left(-w(s;\alpha^{*})^{p-1}+w(s;\lambda^{\frac{1}{p-1}})^{p-1}\right)ds.
\end{multline}
We find that the right-hand-side of \eqref{wint2} is negative. 
We verify that the left-hand-side of \eqref{wint2} tends to zero as $t\rightarrow t(\pi/2)$ (this is unclear, since it might be that $w_{t}(t;\alpha^{*})\rightarrow\infty$ as $t\rightarrow t(\pi/2)$).
We see the second term of the left-hand-side satisfies 
\begin{align*}
 w_{t}(t;\lambda^{\frac{1}{p-1}})w(t;\alpha^{*})\rightarrow 0 \text{ as } t \rightarrow t(\pi/2).
\end{align*}
While, for the first term of the left-hand-side of \eqref{wint2}, noting the concavity of $w$, we obtain
\begin{align*}
&\left|w_{t}\left(t\left(\frac{\pi}{2}-\varepsilon\right);\alpha^{*}\right)w\left(t\left(\frac{\pi}{2}-\varepsilon\right);\lambda^{\frac{1}{p-1}}\right)\right|\\
\leq& \left|\frac{w\left(t(\frac{\pi}{2}-\varepsilon);\alpha^{*}\right)-w\left(t(\frac{\pi}{2});\alpha^{*}\right)}{
t(\pi/2-\varepsilon)-t(\pi/2)}\right|w\left( t\left(\frac{\pi}{2}-\varepsilon\right);\lambda^{\frac{1}{p-1}}\right)\\
=&\left|w\left(t\left(\frac{\pi}{2}-\varepsilon\right);\alpha^{*}\right)\right|\cdot
\left|\frac{-\varepsilon}{t(\pi/2-\varepsilon)-t(\pi/2)}\right|\cdot
\left|\frac{\lambda^{\frac{1}{p-1}}\cos\left(\frac{\pi}{2}-\varepsilon\right)}{\varepsilon \left(\cos\sqrt{1-\lambda}\left(\frac{\pi}{2}-\varepsilon\right)\right)}\right|
\rightarrow 0 \text{ as } \varepsilon\rightarrow 0.
\end{align*}
Hence, the left-hand-side of \eqref{wint2} tends to zero as $t\rightarrow t(\pi/2)$. Therefore, we can assume (i) in both cases $0<b<t(\pi/2)$ and $b=t(\pi/2)$.
From this, we additionally obtain
\begin{align*}
&w_{t}\left(t\left(\frac{\pi}{2}\right);\lambda^{\frac{1}{p-1}}\right)\\
=&\left. \frac{d}{dr}\left(\frac{\lambda^{\frac{1}{p-1}}\cos r}{\cos\left(\sqrt{1-\lambda}r\right)}\right)\right|_{r=\frac{\pi}{2}}\cdot
\left.\frac{dr}{dt}\right|_{t=t(\frac{\pi}{2})}
=-\lambda^{\frac{1}{p-1}}
\cos\left(\sqrt{1-\lambda}\frac{\pi}{2}\right)
\leq w_{t}\left(t\left(\frac{\pi}{2}\right);\alpha^{*}\right)\leq 0.
\end{align*}
Thus, especially we have
\begin{align}\label{wrbdd}
-\infty < w_{t}\left(t\left(\frac{\pi}{2}\right);\alpha^{*}\right)\leq 0. 
\end{align}
Put $w_{1}(t)=w(t;\alpha_{1}), w_{2}(t)=w(t;\alpha_{2})$ in the case $0<b<t(\pi/2)$ and $w_{1}(r)=w(t;\lambda^{\frac{1}{p-1}}), w_{2}(t)=w(t;\alpha^{*})$ in the case $b=t(\pi/2)$. We claim that $w_{1}(t)/w_{2}(t)$ is monotone increasing. By direct computation, it holds
\begin{align}\label{w1-w2}
\left(\frac{w_{1}(t)}{w_{2}(t)}\right)_{t}=\frac{w_{1;t}(r)w_{2}(t)-w_{1}(t)w_{2;t}(t)}{w_{2}(t)^2},
\end{align} 
and
\begin{align*}
\left(w_{1;t}(r)w_{2}(t)-w_{1}(t)w_{2;t}(t)\right)_{t}=h(t)w_{1}(t)w_{2}(t)\left(-w_{1}(t)^{p-1}+w_{2}(t)^{p-1}\right).
\end{align*}
Since $w_{1}$ and $w_{2}$ intersect only once on $(0,b)$, $\left(w_{1;t}(t)w_{2}(t)-w_{1}(t)w_{2;r}(t)\right)_{t}$ 
changes its sign form $+$ to $-$ once on $(0,b)$. Noting \eqref{wrbdd} in the case $b=t(\pi/2)$, we have
\begin{align*}
w_{1;t}(b)w_{2}(b)-w_{1}(b)w_{2;t}(b)=0,\,\,\,\,w_{1;t}(0)w_{2}(0)-w_{1}(0)w_{2;t}(0)=0.
\end{align*}
Hence from \eqref{w1-w2}, we obtain 
\begin{align}\label{monotone1}
 \left(\frac{w_{1}(t)}{w_{2}(t)}\right)_{t}>0,\,\,\,t\in (0,b).
\end{align}
 Here, we introduce a Poho\v{z}aev type function $J(r;w)$ which is used in \cite{SW2020-1,SW2020-2}:
\begin{align}\label{J}
J(t;w)=\frac{1}{2}a(t)w_{t}(t)^{2}+b(t)w_{t}(t)w(t)+\frac{1}{2}c(t)w(t)^{2}+\frac{1}{p+1}a(t)h(t)w(t)^{p+1},
\end{align}
where $w$ is a solution of \eqref{eqw}. If $a(t), b(t), c(t)$ are fixed to
\begin{align*}
a(t)=b-t,\,\,\,b(t)=\frac{1}{2},\,\,\,c(t)=0,
\end{align*}
it holds that
\begin{align}\label{Hdiff}
\frac{dJ(t;w)}{dt}&=\left(\frac{1}{2}a_{t}(t)+b(t)\right)w_{t}(t)^2+\left(b_{t}(t)+c(t)\right)w_{t}(t)w(t)\\
&\quad\ +\frac{1}{2}c_{t}(t)w(t)^2+\left\{-b(t)h(t)+\frac{1}{p+1}\left(a(t)h(t)\right)_{t}\right\}w(t)^{p+1} \nonumber\\
&=H(t)w(t)^{p+1},\nonumber
\end{align}
where 
\begin{align}\label{Hrep}
H(t)=-\frac{1}{2}h(t)+\frac{1}{p+1}\left((b-t)h(t)\right)_{t}
=\frac{1}{p+1}\left(-\frac{p+3}{2}h(t)+(b-t)h_{t}(t)\right).
\end{align}
The last term of \eqref{Hrep} can be rewritten as:
\begin{align}\label{Hrepr}
&\frac{1}{p+1}\left(-\frac{p+3}{2}h(t)+(b-t)h_{t}(t)\right)\\
=&\frac{1}{p+1}\left(-\frac{p+3}{2}\bar{h}(r)+(t(a)-t(r))\bar{h}_{r}(r)\cdot\frac{dr}{dt}\right)\nonumber\\
=&\frac{1}{p+1}\left(-\frac{p+3}{2}\bar{h}(r)+(t(a)-t(r))\bar{h}_{r}(r)\cdot\left(\cos r\sqrt{1-\lambda}\right)^{2}\right)\nonumber
\end{align}
We put 
\begin{align}\label{varr}
\varphi(r,a)= -\frac{p+3}{2}+(t(a)-t(r))\frac{\bar{h}_{r}(r)}{\bar{h}(r)}\left(\cos r\sqrt{1-\lambda}\right)^{2}.
\end{align}
We treat the case $\lambda<1$.
Since by \eqref{hr}, we obtain
\begin{align*}
&\frac{\bar{h}_{r}(r)}{\bar{h}(r)}\left(\cos r\sqrt{1-\lambda}\right)\\
=&-(p+3)\sqrt{1-\lambda} \sin\left(r\sqrt{1-\lambda}\right)
+(p-1)\cos\left(r\sqrt{1-\lambda}\right)\tan(r).
\end{align*}
Also, we have
\begin{align*}
&\left(t(a)-t(r)\right)\left(\cos r\sqrt{1-\lambda}\right)=\left(\frac{\tan\left(a\sqrt{1-\lambda}\right)-\tan\left(r\sqrt{1-\lambda}\right)}{\sqrt{1-\lambda}}\right)\left(\cos (r\sqrt{1-\lambda})\right)\\
=&\frac{\sec(a\sqrt{1-\lambda})\sin\left((a-r)\sqrt{1-\lambda}\right)}{\sqrt{1-\lambda}}.
\end{align*}
Substituting these to \eqref{varr}, we have the following expression:
\begin{align}\label{phie}
\varphi(r,a)=& -\frac{p+3}{2}-\frac{1}{\sqrt{1-\lambda}}\cdot\sec(a\sqrt{1-\lambda})\sin\left((a-r)\sqrt{1-\lambda}\right)\cdot\\
&\quad\quad\cdot \left[(p+3)\sqrt{1-\lambda} \sin\left(r\sqrt{1-\lambda}\right) 
-(p-1)\cos\left(r\sqrt{1-\lambda}\right)\tan(r)\right] .\nonumber
\end{align}
We show that
\begin{align}\label{Hneg}
\varphi(r,a)<0,\quad r\in(0,a),
\end{align}
and hence $H(t)<0$ on $(0,b)$. We temporally assume that \eqref{Hneg} holds. 
Define $d=w_{1}(0)/w_{2}(0)<1$, then from \eqref{monotone1}, \eqref{Hdiff}, \eqref{Hneg} and \eqref{wrbdd} (this inequality is used to show $J(b;w_{2})=0$, in the case $b=t(\pi/2)$), we obtain
\begin{align*}
0&<\int_{0}^{b}H(r)\left(d^{p+1}-\left(\frac{w_{1}(t)}{w_{2}(t)}\right)^{p+1}\right)w_{2}(t)^{p+1}dt \\
&=\int_{0}^{b}\left(d^{p+1}\frac{dJ(t;w_{2})}{dr}-\frac{dJ(t;w_{1})}{dt}\right)dt\\
&=d^{p+1}\left(J(b;w_{2})-J(0;w_{2})\right)-J(b;w_{1})+J(0;w_{1})\\
&=-d^{p+1}J(0;w_{2})+J(0;w_{1})\\
&=-\left(\frac{w_{1}(0)}{w_{2}(0)}\right)^{p+1}\frac{bh(0)}{p+1}w_{2}(0)^{p+1}+\frac{bh(0)}{p+1}w_{1}(0)^{p+1}=0.
\end{align*}
Hence, we obtain a contradiction. 

Now, we show \eqref{Hneg}.
It is easy to see that for $r\in S_{+}\cap (0,a)$, it holds that $\varphi(r,a)<0$, where
\begin{align*}
S_{+}=\{r\in (0,\pi/2)\,\vert\, (p+3)\sqrt{1-\lambda} \sin\left(r\sqrt{1-\lambda}\right) 
-(p-1)\cos\left(r\sqrt{1-\lambda}\right)\tan(r)\geq 0\} .
\end{align*} 
For fixed $r\in (0,a)\setminus S_{+}$, $\varphi(r,a)$ is monotone increasing with respect to $a$, since
\begin{align*}
\frac{\partial\varphi}{\partial a}(r,a)=&-\left(\sec\left(a\sqrt{1-\lambda}\right)\right)^{2}
\left(\cos\left(r\sqrt{1-\lambda}\right)\right)\cdot\\
&\quad\quad\cdot\left[(p+3)\sqrt{1-\lambda} \sin\left(r\sqrt{1-\lambda}\right) 
-(p-1)\cos\left(r\sqrt{1-\lambda}\right)\tan(r)\right]>0.
\end{align*}
Thus it is enough to show that $\varphi(r,\pi/2)<0$ for $r\in (0,\pi/2)\setminus S_{+}$ (indeed, we show in the followings that $\varphi(r,\pi/2)<0$ for $r\in (0,\pi/2)$).
Since it holds $\sin\left(\left(\pi/2-r\right)\sqrt{1-\lambda}\right)<\sqrt{1-\lambda}\cot(r)$ for $r\in (0,\pi/2)$, 
\begin{align}\label{eq0728}
\sin\left(\left(\frac{\pi}{2}-r\right)\sqrt{1-\lambda}\right)\tan(r)<\sqrt{1-\lambda},\quad\quad r\in \left(0,\frac{\pi}{2}\right). 
\end{align}
Applying \eqref{eq0728}, we obtain 
\begin{align*}
&\quad \varphi\left(r,\frac{\pi}{2}\right)< -\frac{p+3}{2}-\frac{\sec\left(\frac{\pi}{2}\sqrt{1-\lambda}\right)}{\sqrt{1-\lambda}}\cdot\\
&\cdot \left[(p+3)\sqrt{1-\lambda}\sin\left(r\sqrt{1-\lambda}\right) \sin\left(\left(\frac{\pi}{2}-r\right)\sqrt{1-\lambda}\right)-(p-1)\sqrt{1-\lambda}\cos\left(r\sqrt{1-\lambda}\right)\right]\\
&=\frac{1}{2}\sec\left(\frac{\pi}{2}\sqrt{1-\lambda}\right)
\left[-(p+3)\cos\left(\left(\frac{\pi}{2}-2r\right)\sqrt{1-\lambda}\right)
+2(p-1)\cos\left(r\sqrt{1-\lambda}\right)
\right]\\
&=:\phi(r).
\end{align*}
By direct computation, it holds
\begin{align*}
\phi_{r}(r)=&-\sqrt{1-\lambda}\sec\left(\frac{\pi}{2}\sqrt{1-\lambda}\right)
\left[(p+3)\sin\left(\left(\frac{\pi}{2}-2r\right)\sqrt{1-\lambda}\right) 
+(p-1)\sin\left(r\sqrt{1-\lambda}\right)
\right]\\
\phi_{rr}(r)=&
(1-\lambda)\sec\left(\frac{\pi}{2}\sqrt{1-\lambda}\right)
\left[2(p+3)\cos\left(\left(\frac{\pi}{2}-2r\right)\sqrt{1-\lambda}\right) 
-(p-1)\cos\left(r\sqrt{1-\lambda}\right)\right].
\end{align*}
We show $\phi(r)<0$ for $r\in [\pi/4,\pi/2)$. Since 
\begin{align*}
0\leq \left(2r-\frac{\pi}{2}\right)\sqrt{1-\lambda}<r\sqrt{1-\lambda}<\frac{\pi}{2},\quad r\in [\pi/4,\pi/2),
\end{align*}
$\phi_{rr}(r)>0$ on $[\pi/4,\pi/2)$. Moreover, we have
\begin{align*}
\phi_{r}\left(\frac{\pi}{4}\right)=&-\sqrt{1-\lambda}(p-1)\sec\left(\frac{\pi}{2}\sqrt{1-\lambda}\right)\sin\left(\frac{\pi}{2}\sqrt{1-\lambda}\right)<0,\\
\phi_{r}\left(\frac{\pi}{2}\right)=&4\sqrt{1-\lambda}\tan\left(\frac{\pi}{2}\sqrt{1-\lambda}\right)>0.
\end{align*}
So, $\phi$ has a unique minimum point on $(\pi/4,\pi/2)$. On the other hand, it holds
\begin{align*}
\phi\left(\frac{\pi}{4}\right)=&\frac{1}{2}\sec\left(\frac{\pi}{2}\sqrt{1-\lambda}\right)
\left[-3-p+2(p-1)\cos\left(\frac{\pi}{4}\sqrt{1-\lambda}\right)\right]\\
<&\frac{1}{2}\sec\left(\frac{\pi}{2}\sqrt{1-\lambda}\right)
\left[-3-p+2(p-1)\right]\\
=&\frac{1}{2}\sec\left(\frac{\pi}{2}\sqrt{1-\lambda}\right)(p-5)\leq 0,\\
\phi\left(\frac{\pi}{2}\right)=&\frac{1}{2}(p-5)\leq 0.
\end{align*}
Thus $\varphi(r,\pi/2)<\phi(r)\leq 0$ on $[\pi/4,\pi/2)$. In the case of $r\in(0,\pi/4)$, it holds
\begin{align*}
\sin\left(r\sqrt{1-\lambda}\right)>\frac{4}{\pi}\sin\left(\frac{\pi}{4}\sqrt{1-\lambda}\right)r \text{ and }
\tan(r)<\frac{4}{\pi}r,
\end{align*}
so for $r\in(0,\pi/4)$, we obtain
\begin{align*}
\varphi(r,\pi/2)<&-\frac{1}{2}(p+3)-\frac{1}{\sqrt{1-\lambda}}
\sec\left(\frac{\pi}{2}\sqrt{1-\lambda}\right)\sin\left(\left(\frac{\pi}{2}-r\right)\sqrt{1-\lambda}\right)\cdot\\
&\cdot
\left[
\sqrt{1-\lambda}(p+3)\left(\frac{4}{\pi}\sin\left(\frac{\pi}{4}\sqrt{1-\lambda}\right)r\right)-(p-1)\left(\frac{4}{\pi}r\right)
\right]\\
=&-\frac{1}{2}(p+3)-\frac{1}{\sqrt{1-\lambda}}
\sec\left(\frac{\pi}{2}\sqrt{1-\lambda}\right)\
\left[r\sin\left(\left(\frac{\pi}{2}-r\right)\sqrt{1-\lambda}\right)\right]\cdot\\
&\cdot
\frac{4}{\pi}\left[
\sqrt{1-\lambda}(p+3)\sin\left(\frac{\pi}{4}\sqrt{1-\lambda}\right)-(p-1)
\right]=:\tilde{\phi}(r)
\end{align*}
Since 
$\left[r\sin\left(\left(\frac{\pi}{2}-r\right)\sqrt{1-\lambda}\right)\right]$ is monotone
increasing for $r\in (0,\pi/4)$, it is enough to show $\tilde{\phi}(0)\leq 0$ and $\tilde{\phi}(\pi/4)<0$.
We have 
\begin{align*}
&\tilde{\phi}\left(\frac{\pi}{4}\right)=-\frac{p+3}{2}\\
-&\frac{1}{\sqrt{1-\lambda}}
\sec\left(\frac{\pi}{2}\sqrt{1-\lambda}\right)\sin\left(\frac{\pi}{4}\sqrt{1-\lambda}\right)
\left[
\sqrt{1-\lambda}(p+3)\sin\left(\frac{\pi}{4}\sqrt{1-\lambda}\right)-(p-1)
\right]
,\\
&\tilde{\phi}(0)=-\frac{p+3}{2}<0.
\end{align*}
We can observe that $\tilde{\phi}(\pi/4)$ is a first order polynomial with respect to $p$ and its coefficient is 
\begin{align*}
\frac{\sec\left(\frac{\pi}{2}\sqrt{1-\lambda}\right)}{2\sqrt{1-\lambda}}\left(-\sqrt{1-\lambda}+2\sin\left(
\frac{\pi}{4}\sqrt{1-\lambda}
\right)\right)>0.
\end{align*}
Hence we show that $\tilde{\phi}(\pi/4)<0$ for $p=5$. Substituting $p=5$ to $\tilde{\phi}(\pi/4)$ we obtain
\begin{align*}
\frac{4\sec\left(\frac{\pi}{2}\sqrt{1-\lambda}\right)}{\sqrt{1-\lambda}}\left[
\sin\left(\frac{\pi}{4}\sqrt{1-\lambda}\right)-\sqrt{1-\lambda}
\right]<0.
\end{align*}
For the case $\lambda=1$, it holds
\begin{align}\label{lam1}
\varphi(r,a)=-\frac{p+3}{2}+(p-1)(a-r)(\tan r)\leq -\frac{p+3}{2}+(p-1)(\frac{\pi}{2}-r)(\tan r).
\end{align}
We can easily show the right-hand-side of \eqref{lam1} is negative for $r\in (0,\pi/2)$ and $1<p\leq 5$.
This completes the proof.
\end{proof}
We prove Corollary \ref{corop}.

\noindent
{\it Proof of Corollary \ref{corop}.} First we note the proof of Lemma \ref{uniqeven} is also applicable to the
case $-\lambda_{1}<\lambda\leq 3/4$. Recall that in the case $b=t(\pi/2)$ and $w(0)\geq \vert\lambda\vert^{\frac{1}{p-1}}$,
even function solution of \eqref{eqw} is unique and hence even function solution of \eqref{eqr} with $a=\pi/2$ is unique ($u\equiv \vert\lambda\vert^{\frac{1}{p-1}}$). 
The assumption $w(0)\geq \vert\lambda\vert^{\frac{1}{p-1}}$ was used to derive the relation \eqref{wrbdd}.
Thus, in the case $w(0)< \vert\lambda\vert^{\frac{1}{p-1}}$, if \eqref{wrbdd} holds, uniqueness of even function solution of \eqref{eqw} also follows.
Now suppose $w_{t}(t(\pi/2;w(0)))=-\infty$. Since
\begin{align}
w_{t}(t)=u_{r}(r(t))(\cos r(t))\psi(r(t))-u(r(t))\left[\psi_{r}(r(t))(\cos r(t))+\psi(r(t))(\sin r(t))\right], 
\end{align}
and 
\begin{align*}
 \lim_{t\rightarrow t(\pi/2)}\vert u(r(t))\left[\psi_{r}(r(t))(\cos r(t))+\psi(r(t))(\sin r(t))\right]\vert < \infty,
\end{align*}
it holds
\begin{align*}
 -u_{r}(r)=o\left((\cos r)^{-1}\right)\,\,\text{ as } r\rightarrow \pi/2.
\end{align*}
Though, this implies $\lim_{r\rightarrow \pi/2}u(r)=-\infty$. Hence it can not be a non-negative solution of \eqref{eqr}.
\hfill \qed
\begin{lem}\label{endpositive}
Assume $p>3$ and $0<\lambda\leq 1$. Then, 
the first zero $Z(\alpha)$ of the solution to \eqref{initw} exists in $(0,t(\pi/2)]$.
\end{lem}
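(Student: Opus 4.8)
The plan is to argue by contradiction, exploiting that the coefficient $h$ blows up as $t\to t(\pi/2)^-$ and that for $p>3$ this blow-up is strong enough to drag every solution down to zero before (or exactly at) the finite endpoint $t(\pi/2)$. First I would record the qualitative behaviour of $w(\cdot\,;\alpha)$: as long as $w>0$ one has $w_{tt}=-h(t)w^p<0$, since $h(t)=(\cos r(t))^{1-p}\psi(r(t))^{p+3}>0$ on $[0,t(\pi/2))$. Hence $w$ is strictly concave, $w_t$ is strictly decreasing, and because $w_t(0)=0$ the solution is strictly decreasing for $t>0$. Therefore, if $w$ were to stay positive on all of $(0,t(\pi/2))$, the monotone limit $L=\lim_{t\to t(\pi/2)^-}w(t)\in[0,\alpha)$ would exist.

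The crux is to rule out $L>0$. Supposing $L>0$, monotonicity gives $w(t)\ge L$ throughout, whence $w_{tt}(t)\le -L^{p}h(t)$. Integrating twice from $0$ (using $w_t(0)=0$ and $w(0)=\alpha$) gives
\[
w(t)\le \alpha-L^{p}\int_0^{t}\!\!\int_0^{s}h(\sigma)\,d\sigma\,ds ,
\]
and I would show the iterated integral diverges as $t\to t(\pi/2)$. Using $dt=dr/\psi(r)^2$, the inner integral becomes $\int_0^{r(s)}(\cos\rho)^{1-p}\psi(\rho)^{p+1}\,d\rho$; here $0<\lambda\le1$ is essential, since then $\psi(\rho)^{p+1}\to\bigl(\cos\tfrac{\pi}{2}\sqrt{1-\lambda}\bigr)^{p+1}>0$ (at $\lambda=0$ this factor would vanish and $h$ would stay bounded). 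Thus the inner integral behaves like $(\pi/2-r)^{2-p}$, and a further integration produces $(\pi/2-r)^{3-p}$, which tends to $+\infty$ precisely when $p\ge3$. For $p>3$ the right-hand side above therefore tends to $-\infty$, contradicting $w>0$; hence $L=0$.

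Finally I would assemble the conclusion. If $w$ vanishes somewhere inside $(0,t(\pi/2))$, then $Z(\alpha)<t(\pi/2)$ and we are done. Otherwise $w>0$ on $(0,t(\pi/2))$, and the step above forces $w\to0$ as $t\to t(\pi/2)$; the monotone bounded solution then extends continuously with $w(t(\pi/2))=0$ (the explicit solution $\lambda^{1/(p-1)}\cos r(t)/\psi(r(t))$ of Lemma \ref{lemcosr}, which reaches zero exactly at $t(\pi/2)$ with finite slope, illustrates this), so $Z(\alpha)=t(\pi/2)$. In either case $Z(\alpha)\in(0,t(\pi/2)]$.

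The main obstacle is the double-integral estimate at the singular endpoint: one must track the power of $(\pi/2-r)$ after two integrations and verify that $p>3$ is the sharp threshold for divergence. This is exactly what forces the hypothesis $p>3$: a single integration (valid already for $p\ge2$) only shows $w_t\to-\infty$, which is not by itself enough to guarantee that $w$ reaches zero within the finite interval $(0,t(\pi/2))$.
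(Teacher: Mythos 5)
Your argument is correct and is essentially the paper's own proof: assume the solution stays positive up to $t(\pi/2)$ with positive limit, bound $h(t)$ from below near the endpoint using $\cos r\lesssim \frac{\pi}{2}-r$ together with $\psi(r)\ge\cos\bigl(\frac{\pi}{2}\sqrt{1-\lambda}\bigr)>0$ (which is where $0<\lambda\le 1$ enters), and derive a contradiction from the divergence of the twice-iterated integral, where $p>3$ is exactly the threshold after the substitution $dt=dr/\psi(r)^2$. Your explicit treatment of the case $L=0$, yielding $Z(\alpha)=t(\pi/2)$, makes precise a point the paper leaves implicit, but the core estimate and structure coincide with the paper's proof.
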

\begin{proof}

Assume to the contrary $w(t)>0$ for $t\in [0,t(\pi/2)]$.
For simplicity, we denote $\lim_{t\rightarrow t(\pi/2)}w(t)=w_0>0$.
We note it holds
\begin{equation*}
 \cos(r(t)) \ge \frac{2}{\pi}\left(\frac{\pi}{2}-r(t)\right), \quad t \in [0,t(\pi/2)].
\end{equation*}
and 
\begin{align*}
\psi(r(t))=\cos(r(t)\sqrt{1-\lambda})\geq \cos\left(\frac{\pi}{2}\sqrt{1-\lambda}\right)=m,\quad  t \in [0,t(\pi/2)].
\end{align*}
Then, from \eqref{initw} for $t\in(0,t(\pi/2))$, it holds
\begin{align}\label{wint}
-w(t)+w(0)&=\int_{0}^{t}\int_{0}^{\tau}\left(\cos r(s)\right)^{1-p}\psi(r(s))^{p+3}w(s)^{p}dsd\tau\\
&> \left(\frac{2}{\pi}\right)^{1-p} m^{p+3} w_0^p \int_{0}^{t}\int_{0}^{\tau} \left(\frac{\pi}{2}-r(s)\right)^{1-p} dsd\tau \nonumber \\
&= \left(\frac{2}{\pi}\right)^{1-p} m^{p+3} w_0^p \int_{0}^{t}\int_{0}^{r(\tau)}
\left(\frac{\pi}{2}-r\right)^{1-p}\cdot\frac{1}{\left(\cos r\sqrt{1-\lambda}\right)^{2}}drd\tau\nonumber\\
&> \left(\frac{2}{\pi}\right)^{1-p} m^{p+3} w_0^p \int_{0}^{t}\int_{0}^{r(\tau)}
\left(\frac{\pi}{2}-r\right)^{1-p}drd\tau\nonumber\\
&=\left(\frac{2}{\pi}\right)^{1-p} m^{p+3} w_0^p\cdot\frac{1}{p-2} \int_{0}^{t}
\left(\frac{\pi}{2}-r(\tau)\right)^{2-p}-\left(\frac{\pi}{2}\right)^{2-p}d\tau\nonumber\\
&=\left(\frac{2}{\pi}\right)^{1-p} m^{p+3} w_0^p\cdot\frac{1}{p-2} \int_{0}^{r(t)}
\left[
\left(\frac{\pi}{2}-r\right)^{2-p}-\left(\frac{\pi}{2}\right)^{2-p}
\right]
\cdot\frac{1}{\left(\cos r\sqrt{1-\lambda}\right)^{2}}dr\nonumber\\
&>\left(\frac{2}{\pi}\right)^{1-p} m^{p+3} w_0^p \cdot\frac{1}{p-2} \int_{0}^{r(t)}
\left(\frac{\pi}{2}-r\right)^{2-p}-\left(\frac{\pi}{2}\right)^{2-p} dr.\nonumber
\end{align} 
Letting $t\rightarrow t(\pi/2)$, hence $r(t)\rightarrow \pi/2$, the left-hand-side of \eqref{wint} is finite, though the right-hand-side goes to infinity since $3<p$.
Hence a contradiction.
\end{proof}
\subsection{General uniqueness}
In this section, we consider the uniqueness of positive radial solution of \eqref{eqw}, without restricting to even functions.
First, we show Theorem \ref{thmmain}-(ii).
\begin{proof}[Proof of Theorem \ref{thmmain}-(ii).]
The claim is proved by Theorem 13-(i)-(b) of \cite{MR3470747}, which states that if
$p\leq \min\{6/\lambda-3,5 \}$, then \eqref{eqw} has a unique positive radial solution.
We note if $0<\lambda\leq 3/4$, $\min\{6/\lambda-3,5 \}=5$. Thus, we have proved the assertion.
\end{proof}
Next, we show Theorem \ref{thmmain}-(i).
\begin{proof}[Proof of Theorem \ref{thmmain}-(i).]
To the contrary, assume that \eqref{eqw} has two distinct solutions $w_{1}$ and $w_{2}$ satisfying $w_{1;t}(-b)<w_{2;t}(-b)$. Then
it holds from \cite[Lemma 2]{SW2020-1},
\begin{align}\label{monotone2}
 \frac{d}{dt}\left(\frac{w_{1}(t)}{w_{2}(t)}\right)>0,\,\,\,\,t\in (-b,b).
\end{align}
Putting 
\begin{align*}
a(t)=b^2-t^2,\,\,b(t)=t,\,\,c(t)=-1
\end{align*}
in \eqref{J}, 
we obtain \eqref{Hdiff} as in Lemma \ref{uniqeven}, where 
\begin{align}\label{Hrep2}
H(t)=-b(t)h(t)+\frac{1}{p+1}\left(a(t)h(t)\right)_{t}=-\frac{p+3}{p+1}th(t)+\frac{1}{p+1}\left(b^2-t^2\right)h_{t}(t).
\end{align} 
The last term of \eqref{Hrep2} can be rewritten as:
\begin{align}
&=-\frac{p+3}{p+1}th(t)+\frac{1}{p+1}\left(b^2-t^2\right)h_{t}(t)\\
&=\frac{1}{p+1}\left(-(p+3)t(r)\bar{h}(r)+\left(t(a)^2-t(r)^2\right)\bar{h}_{r}(r)\cdot\frac{dr}{dt}\right)
\nonumber\\
&=\frac{1}{p+1}\left(-(p+3)t(r)\bar{h}(r)+\left(t(a)^2-t(r)^2\right)\bar{h}_{r}(r)\cdot \left(\cos r\sqrt{1-\lambda}\right)^{2}\right).\nonumber
\end{align} 
We note it holds
\begin{align*}
&\bar{h}_{r}(r)\left(\cos r\sqrt{1-\lambda}\right)=\left(
(\cos r)^{1-p}\left(\cos r\sqrt{1-\lambda}\right)^{p+3}\right)_{r}\left(\cos r\sqrt{1-\lambda}\right)\\
&=(\cos r)^{-p}\left(\cos r\sqrt{1-\lambda}\right)^{p+3}\cdot\\
&\cdot\left[
(p-1)\left(\cos r\sqrt{1-\lambda}\right)(\sin r)-(p+3)\sqrt{1-\lambda}(\cos r)\left(\sin r\sqrt{1-\lambda}\right)
\right],\\
&\quad\\
&\left(t(a)^2-t(r)^2\right)\left(\cos r\sqrt{1-\lambda}\right)=
\left[
\left(\frac{\tan a\sqrt{1-\lambda}}{\sqrt{1-\lambda}}\right)^2
-\left(\frac{\tan r\sqrt{1-\lambda}}{\sqrt{1-\lambda}}\right)^2
\right]\left(\cos r\sqrt{1-\lambda}\right)\\
&=\frac{1}{1-\lambda}
\left[\left(\cos r\sqrt{1-\lambda}\right)\left(\sec a\sqrt{1-\lambda}\right)^{2} 
-(\sec r\sqrt{1-\lambda})\right].
\end{align*}
Thus, we have 
\begin{align}\label{Hrep2b}
H(t)=&\frac{(\cos r(t))^{-p}\left(\cos r(t)\sqrt{1-\lambda}\right)^{p+3}}{(p+1)(1-\lambda)}\cdot\\
&\cdot \Bigl[-(p-1)(\sin r(t))+\left(\sec a\sqrt{1-\lambda}\right)^{2}\left(\cos r(t)\sqrt{1-\lambda}\right)\cdot\nonumber\\
&\cdot\Bigl(
(p-1)\left(\cos r(t)\sqrt{1-\lambda}\right)\left(\sin r(t)\right)-(p+3)\sqrt{1-\lambda}\left(\sin r(t)\sqrt{1-\lambda}\right)\left(\cos r(t)\right)
\Bigr) \Bigr].\nonumber
\end{align}
We put
\begin{align}\label{genuniq}
\varphi(r,a):=&
-(p-1)(\sin r)+\left(\sec a\sqrt{1-\lambda}\right)^{2}\left(\cos r\sqrt{1-\lambda}\right)\cdot\\
&\cdot\Bigl(
(p-1)\left(\cos r\sqrt{1-\lambda}\right)\left(\sin r\right)-(p+3)\sqrt{1-\lambda}\left(\sin r\sqrt{1-\lambda}\right)\left(\cos r\right)
\Bigr)\nonumber
\end{align}
From \eqref{Hrep2b}, we observe $H$ is an odd function, so if $H(t)<0$ for $t\in (0,b)$, we see that $H$ changes its sign from $+$ to $-$ only once at $t=0$ on $(-b,b)$.
We temporally assume that $H(t)<0$ for $t\in (0,b)$.
Define $d=w_{1}(0)/w_{2}(0)$, then from \eqref{monotone2} and negativity of $H$ on $(0,b)$ (and positivity of $H$ on $(-b,0)$), we obtain
\begin{align*}
&0<\int_{-b}^{b}H(t)\left(d^{p+1}-\left(\frac{w_{1}(t)}{w_{2}(t)}\right)^{p+1}\right)w_{2}(t)^{p+1}dt\\
&=d^{p+1}\left(J(b;w_{2})-J(-b;w_{2})\right)-J(b;w_{1})+J(-b;w_{1})=0.
\end{align*}
Hence a contradiction. Now, we show $H(t)<0$ for $t\in (0,b)$ (actually, we show $\varphi(r,a)<0$ for $r\in (0,a)$).
We can see that (dropping the term $-2(p-1)(\sin r)$), 
\begin{align*}
\varphi(r,a)<& 
2\left(\sec a\sqrt{1-\lambda}\right)^{2}\left(\cos r\sqrt{1-\lambda}\right)\cdot\\
&\cdot\Bigl(
(p-1)\left(\cos r\sqrt{1-\lambda}\right)\left(\sin r\right)-(p+3)\sqrt{1-\lambda}\left(\sin r\sqrt{1-\lambda}\right)\left(\cos r\right)
\Bigr)
.
\end{align*}
Hence it is enough to show that
\begin{align*}
p-1-(p+3)\sqrt{1-\lambda}(\cot r)(\tan r\sqrt{1-\lambda})<0,\quad r\in (0,a).
\end{align*}
Since 
\begin{align*}
-\sqrt{1-\lambda}(\sin 2r)+\sin 2r\sqrt{1-\lambda}<0,\quad r\in (0,a), 
\end{align*}
it holds
\begin{align*}
\left(-(\cot r)\tan r\sqrt{1-\lambda}\right)_{r}<0, ,\quad r\in (0,a).
\end{align*}
Since 
\begin{align*}
p-1-(p+3)\lim_{r\rightarrow 0}\left(\sqrt{1-\lambda}(\cot r)(\tan r\sqrt{1-\lambda})\right)=
-4+(p+3)\lambda <0,
\end{align*}
we have shown the assertion.
\end{proof}

Assertion (iv) of Theorem \ref{thmmain} is rather comlicated, thus we show this part in the 
Appendix B.
\section{Multiple existence Results}
\subsection{Multiple existence of non-even function solutions}

In this subsection, a part of assertions (iii), (v) and (vi) of Theorem \ref{thmmain} is proved.
First, we introduce the Morse index of a solution to problem \eqref{eqw}.
The Morse index of a solution $w$ to \eqref{eqw} is defined as the number of negative eigenvalues to 
\begin{align}\label{LL}
\begin{cases}
 \Phi_{tt}(t)+p\left(\cos r(t)\right)^{1-p}\psi(r(t))^{p+3}w(t)^{p-1}\Phi(t)+\mu\Phi(t)=0,\quad t\in (-b,b),\\
 \Phi(-b)=\Phi(b)=0.
\end{cases}
\end{align}
The following is known.
\begin{lem}\label{lemMI1}
The Morse index of the least energy solution to the Rayleigh quotient 
\eqref{Reyleigh-ne} is one.
\end{lem}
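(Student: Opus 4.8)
The plan is to prove that the least energy solution $w$ of the Rayleigh quotient \eqref{Reyleigh-ne} has Morse index exactly one by connecting the minimization structure of $R$ with the eigenvalue problem \eqref{LL}. The standard strategy for least energy (ground state) solutions is to show that the quadratic form associated with \eqref{LL} is negative on the span of $w$ itself but nonnegative on a codimension-one subspace. Concretely, I would introduce the energy functional $I(\Phi)=\int_{-b}^{b}\Phi_t^2\,dt-\int_{-b}^{b}p\,(\cos r(t))^{1-p}\psi(r(t))^{p+3}w(t)^{p-1}\Phi^2\,dt$, whose negative-definite subspaces correspond exactly to the negative eigenvalues counted by the Morse index. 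The first step is to verify $I(w)<0$: since $w$ solves \eqref{eqw}, multiplying the equation by $w$ and integrating gives $\int_{-b}^b w_t^2\,dt=\int_{-b}^b h(t)w^{p+1}\,dt$, and therefore $I(w)=(1-p)\int_{-b}^b h(t)w^{p+1}\,dt<0$ because $p>1$ and the integrand is positive. This establishes that the Morse index is at least one.

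The harder and decisive step is to show the Morse index is \emph{at most} one, i.e.\ that $I$ is nonnegative on a suitable codimension-one subspace. Here I would exploit the variational characterization of $w$ as a minimizer of $R$ over the Nehari-type constraint, or equivalently over $H^1_0(-b,b)\setminus\{0\}$ modulo scaling. The key fact is that a least energy solution minimizes $R$, and the second variation of $R$ at a minimizer is nonnegative; translating this nonnegativity of $R''(w)$ into information about $I$ requires separating the radial scaling direction (along $w$) from the orthogonal directions. The clean way is to recall that for the mountain-pass/least-energy critical point of the associated functional, the constrained second variation on the Nehari manifold is nonnegative, and the Nehari constraint removes precisely the one negative direction furnished by $w$. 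Thus any $\Phi$ in the tangent space of the Nehari manifold at $w$ satisfies $I(\Phi)\ge 0$, and since that tangent space has codimension one in $H^1_0(-b,b)$, the number of negative eigenvalues of \eqref{LL} cannot exceed one.

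To make the codimension-one argument rigorous I would set up the Nehari manifold $\mathcal N=\{\Phi\in H^1_0(-b,b)\setminus\{0\}: \int_{-b}^b \Phi_t^2\,dt=\int_{-b}^b h(t)|\Phi|^{p+1}\,dt\}$, on which the energy functional $\Phi\mapsto \frac12\int \Phi_t^2-\frac{1}{p+1}\int h|\Phi|^{p+1}$ attains its minimum at $w$ (this is the standard identification of the least energy solution with the Nehari minimizer, and it is consistent with the minimization of $R$ in Lemma \ref{exmin}). Because $w$ minimizes this energy on $\mathcal N$, the second derivative is nonnegative along every tangent direction to $\mathcal N$ at $w$; a direct computation shows this second derivative equals $I(\Phi)$ up to the constraint. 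Since $\mathcal N$ is a smooth manifold of codimension one near $w$ (its defining functional has nonvanishing derivative at $w$, as $p>1$), the form $I$ can have at most one negative direction, completing the bound.

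The main obstacle I anticipate is the careful bookkeeping in the codimension-one reduction: one must verify that the single negative direction is genuinely captured by the Nehari constraint and is not double-counted, and that the tangent space decomposition $H^1_0=\mathrm{span}\{w\}\oplus T_w\mathcal N$ is an actual direct sum with $I$ negative on the first summand and nonnegative on the second. This is where a naive argument can fail, since $w$ itself need not be $L^2$-orthogonal to $T_w\mathcal N$; the correct statement is about the splitting of the quadratic form, not about orthogonality, and I would phrase the final count in terms of the signature of $I$ restricted to these subspaces, invoking a min-max (Courant–Fischer) characterization of the eigenvalues of \eqref{LL} to conclude that exactly one eigenvalue is negative.
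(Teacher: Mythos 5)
Your proposal is correct and is precisely the standard argument behind this lemma: the identity $\int_{-b}^{b}w_t^2\,dt=\int_{-b}^{b}h(t)w^{p+1}\,dt$ gives $I(w)=(1-p)\int_{-b}^{b}h(t)w^{p+1}\,dt<0$, hence index at least one, while nonnegativity of the free Hessian on the codimension-one tangent space $T_w\mathcal N=\{\Phi:\int_{-b}^{b}h(t)w^p\Phi\,dt=0\}$ (legitimate because the Nehari constraint is natural, i.e.\ the Lagrange multiplier vanishes at $w$ since $E'(w)=0$, so the constrained and free second variations agree there) caps the index at one via the Courant--Fischer characterization, exactly as you indicate in your final paragraph. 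The paper itself offers no proof but delegates to Lemma 3 of Takahashi \cite{Ta2013}, whose proof follows this same Nehari-manifold/second-variation route, so your reconstruction coincides with the paper's (cited) argument.
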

For the proof of this lemma, see; Lemma 3 in Takahashi \cite{Ta2013}.
Using Lemma \ref{lemMI1}, we obtain the following proposition which shows assertion (v) of Theorem \ref{thmmain}.

By the following proposition, we obtain (iv) of Theorem \ref{thmmain}. 
\begin{proposition}\label{propnoneven3}
Let $3/4<\lambda\leq 1$.
For each $p$ satisfying
\begin{align}
\frac{3}{\lambda}+1<p\leq 5,
\end{align}
there exists $\varepsilon_{p}\in (0,\pi/2)$ such that if $0<\varepsilon<\varepsilon_{p}$ and $a=\pi/2 - \varepsilon$
are satisfied, then problem \eqref{eqw} has at least three positive radial solutions (one solution is a unique even function solution and other two solutions are non-even function solutions which are minimizers of \eqref{Reyleigh-ne}). 
\end{proposition}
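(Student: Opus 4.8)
The plan is to obtain the three solutions by comparing Morse indices in the full radial space. By Lemma~\ref{exmin} there is a least energy solution $w_\ast$ realizing the infimum \eqref{Reyleigh-ne} and an even least energy solution realizing \eqref{Reyleigh}; by Lemma~\ref{uniqeven} the latter coincides with the \emph{unique} even solution $w_{\mathrm e}$ of \eqref{eqw}. By Lemma~\ref{lemMI1} the Morse index of $w_\ast$ is exactly $1$. Hence it suffices to show that the Morse index of $w_{\mathrm e}$ is at least $2$: this forces $w_\ast\neq w_{\mathrm e}$, so $w_\ast$ is non-even, and since the coefficient $h$ in \eqref{ht} is even in $t$, the reflection $t\mapsto w_\ast(-t)$ is again a minimizer of \eqref{Reyleigh-ne}, hence a second least energy solution, necessarily distinct from $w_\ast$. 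Together with $w_{\mathrm e}$ this yields the three asserted solutions.

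To bound the Morse index of $w_{\mathrm e}$ from below I would study the quadratic form associated with \eqref{LL},
\[
 Q(\Phi)=\int_{-b}^{b}\bigl(\Phi_t(t)^2-p\,h(t)\,w_{\mathrm e}(t)^{p-1}\Phi(t)^2\bigr)\,dt,\qquad \Phi\in H^1_0(-b,b).
\]
Because $h$ and $w_{\mathrm e}$ are even, $Q$ preserves the decomposition of $H^1_0(-b,b)$ into even and odd functions, so the Morse index of $w_{\mathrm e}$ is the sum of the indices of $Q$ on these two subspaces. On the even subspace, taking $\Phi=w_{\mathrm e}$ and using \eqref{eqw} gives $Q(w_{\mathrm e})=(1-p)\int_{-b}^{b}h\,w_{\mathrm e}^{p+1}\,dt<0$, so the even index is at least $1$. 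It remains to produce a single odd direction on which $Q$ is negative; this is exactly where the hypothesis $p>3/\lambda+1$ enters.

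For the odd part I would pass to the limit $\varepsilon\to0$ (i.e. $a\to\pi/2$), where $w_{\mathrm e}$ converges to the explicit solution $w_0=\lambda^{1/(p-1)}\cos r(t)/\psi(r(t))$ of Lemma~\ref{lemcosr}. A direct computation gives $h\,w_0^{p-1}=\lambda\,\psi(r(t))^4$, so the limiting equation \eqref{LL} at $\mu=0$, rewritten in $r$ and then in $s=\sqrt{1-\lambda}\,r$, becomes $(\cos^2 s\,\Phi_s)_s+\tfrac{p\lambda}{1-\lambda}\cos^2 s\,\Phi=0$. The substitution $\Phi=v/\cos s$ removes the first order term and reduces this to the constant coefficient equation $v_{ss}+(k+1)v=0$ with $k=p\lambda/(1-\lambda)$, whence the odd solution is $\Phi_{\mathrm{odd}}=\sin\bigl((\nu+1)s\bigr)/\cos s$ with $\nu+1=\sqrt{1+k}=\sqrt{(1+(p-1)\lambda)/(1-\lambda)}$. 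Its first positive zero is at $s=\pi/(\nu+1)$, which lies strictly inside the limiting interval $0<s<\tfrac{\pi}{2}\sqrt{1-\lambda}$ precisely when $\nu+1>2/\sqrt{1-\lambda}$, i.e. when $(p-1)\lambda>3$. Thus for $p>3/\lambda+1$ the lowest odd Dirichlet eigenvalue of the limiting linearization is strictly negative, giving the required negative odd direction in the limit.

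The main obstacle is to transfer this strict negativity from the explicit limit to small $\varepsilon>0$. I would establish that $w_{\mathrm e}=w_{\mathrm e,\varepsilon}$ converges to $w_0$ (uniformly on compact subsets, after noting $w_{\mathrm e,\varepsilon}(0)\to\lambda^{1/(p-1)}$) and that the eigenvalues of \eqref{LL} depend continuously on $\varepsilon$; since the strict inequality $p>3/\lambda+1$ keeps the limiting second eigenvalue bounded away from $0$, it persists for all sufficiently small $\varepsilon$. This makes the Morse index of $w_{\mathrm e}$ at least $2$ on the range $3/\lambda+1<p\le5$, completing the comparison argument. The ODE reduction above is elementary and pins the threshold down exactly, so essentially all the remaining difficulty lies in this limiting and continuity analysis rather than in the computation itself.
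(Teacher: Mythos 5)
Your skeleton is the same as the paper's: uniqueness of the even solution from Lemma \ref{uniqeven}, Morse index one of the least energy solution (Lemma \ref{lemMI1}), showing the even solution has Morse index at least two for small $\varepsilon$ by linearizing around the explicit solution $w_0(t)=\lambda^{1/(p-1)}\cos r(t)/\psi(r(t))$ of Lemma \ref{lemcosr}, and reflection to produce two distinct non-even minimizers of \eqref{Reyleigh-ne}. Your computation $h\,w_0^{p-1}=\lambda\psi(r(t))^4$ and the exact odd solution $\sin((\nu+1)s)/\cos s$ with threshold $(p-1)\lambda>3$ is a sharp version of what the paper does with a margin: it picks $\delta\in(0,1)$ with $\sqrt{p\lambda(1-\delta)-\lambda+1}\,(\pi/2-\delta)>\pi$ and builds the comparison function $V(t)=\widetilde v(r(t))/\phi(r(t))$ solving $V_{tt}+p\lambda(1-\delta)\psi(r(t))^4V=0$ with three zeros in a \emph{fixed} compact interval $[-t(z_1),t(z_1)]$. (Your even/odd splitting versus the paper's direct use of $\mu_2$ and the node count of the second eigenfunction is an inessential difference.)

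The genuine gap is the transfer step, which you flag but do not carry out, and it is not routine. First, the convergence $w_{\mathrm e,\varepsilon}\to w_0$ is where the paper invests real work: it identifies the unique even solution with the shooting solution $w(\cdot\,;\alpha_\varepsilon)$ and proves $\alpha_\varepsilon\to\lambda^{1/(p-1)}$ by showing the first-zero map $Z(\alpha)$ exists for every $\alpha>\lambda^{1/(p-1)}$ (Lemma \ref{endpositive}, which needs $p>3$, available here since $p>3/\lambda+1\ge 4$), is continuous, and is \emph{strictly decreasing} — the strict monotonicity being deduced from the uniqueness Lemma \ref{uniqeven} together with Lemma \ref{zzero}. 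Your phrase ``after noting $w_{\mathrm e,\varepsilon}(0)\to\lambda^{1/(p-1)}$'' conceals precisely this argument. Second, ``eigenvalues of \eqref{LL} depend continuously on $\varepsilon$'' is delicate because the interval $(-b,b)$ varies with $\varepsilon$ and the potential degenerates at the endpoints in the limit; the paper avoids spectral perturbation theory entirely by working on the fixed interval $[-t(z_1),t(z_1)]$: once $(\cos r(t))^{1-p}\psi(r(t))^{p-1}w(t;\alpha_\varepsilon)^{p-1}\ge\lambda(1-\delta/2)$ there (by locally uniform continuous dependence), Sturm comparison with $V$ forces any eigenfunction with $\mu_2\ge0$ to have at least two zeros in $(-t(z_1),t(z_1))$, contradicting the Sturm--Liouville node count. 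Your variational variant can be repaired in the same spirit (truncate your limiting odd eigenfunction to a fixed compact subinterval and use it as a test function for the odd part of the quadratic form, using the margin from the strict inequality $p>3/\lambda+1$), but as written the decisive step is only an outline. Finally, your substitution $s=\sqrt{1-\lambda}\,r$ degenerates at $\lambda=1$, a case the statement includes; it must be handled directly ($\Phi_{tt}+p\Phi=0$ on $(-\pi/2,\pi/2)$, threshold $p>4$), as the paper's formulation with $\psi^4$ does uniformly.
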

\begin{proof}
The uniqueness of positive even function solution for $1<p\leq 5$ follows from Lemma \ref{uniqeven}. 
By Lemmas 4 and 5, for each $\alpha>\lambda^{1/(p-1)}$, the first zero 
$Z(\alpha)$ of the solution of (7) exists in $(0,t(\pi/2))$.
Hence, $Z(\alpha)$ is continuous in $\alpha>\lambda^{1/(p-1)}$.
Since $w(r;\lambda^{1/(p-1)})=\lambda^{1/(p-1)}\cos r(t)/\psi(r(t))>0$ for 
$0 \le t <t(\pi/2)$ by Lemma \ref{lemcosr}, 
we deduce that $Z(\alpha)\to t(\pi/2)$ as $\alpha\to{\lambda^{1/(p-1)}}^+$.
We claim that $Z(\alpha)$ is strictly decreasing in $\alpha>\lambda^{1/(p-1)}$.
Assume that there exist $\alpha_1$ and $\alpha_2$ such that
$\lambda^{1/(p-1)}<\alpha_1<\alpha_2$ and $Z(\alpha_1)\le Z(\alpha_2)$.
Lemma 2 implies that there exists $\alpha_3 \ge \alpha_2$
such that $Z(\alpha_1)=Z(\alpha_3)$, which means that 
$w(r;\alpha_1)$ and $w(r;\alpha_3)$ are two distinct even function solutions of
(5) with $b=Z(\alpha_1)$.
This contradicts Lemma 4.
Hence, $Z(\alpha)$ is strictly decreasing in $\alpha>\lambda^{1/(p-1)}$ 
as claimed.
Since $Z(\alpha)\to\pi/2$ as $\alpha\to{\lambda^{1/(p-1)}}^+$ and 
$Z(\alpha)\to0$ as $\alpha\to\infty$, for each $\varepsilon>0$, 
there exists a unique $\alpha_\varepsilon>\lambda^{1/(p-1)}$ such that 
$Z(\alpha_\varepsilon)=t(\pi/2)-\varepsilon$ 
and $\alpha_\varepsilon \to \lambda^{1/(p-1)}$ as $\varepsilon\to0^+$.

Since $p>(3/\lambda)+1$, there exists $\delta\in(0,1)$ such that
\begin{equation*}
 \sqrt{[p\lambda(1-\delta)-\lambda+1]}\left( \frac{\pi}{2}-\delta \right) > \pi.
\end{equation*}
Set
\begin{equation*}
 v(r) = \sin\left(\sqrt{[p\lambda(1-\delta)-\lambda+1]}r \right).
\end{equation*}
Then $v$ is a solution of
\begin{equation*}
 v_{rr}(r) + [p\lambda(1-\delta)-\lambda+1] v(r) = 0
\end{equation*}
and has three zeros $-z_1$, $0$, $z_1$ in 
$(-(\pi/2)+\delta,(\pi/2)-\delta)$ for some $z_1>0$.
We set $\widetilde{v}(r)=v(r)/\cos r$.
Then $\widetilde{v}$ satisfies
\begin{equation*}
 \widetilde{v}_{rr}(r) - 2(\tan r)\widetilde{v}_r(r) 
 - \lambda\widetilde{v}(r) + p\lambda(1-\delta)\widetilde{v}(r) = 0, \quad 
 r \in (-(\pi/2)+\delta,(\pi/2)-\delta).
\end{equation*}
Moreover, we set $V(t)=\widetilde{v}(r(t))/\phi(r(t))$.
Then $V$ is a solution of
\begin{equation*}
 V_{tt}(t) + p\lambda(1-\delta) \psi(r(t))^4 V(t) = 0, \quad
 t \in (-t((\pi/2)-\delta),t((\pi/2)-\delta))
\end{equation*}
and has three zeros $-t(z_1)$, $0$, $t(z_1)$ in 
$(-t((\pi/2)-\delta),t((\pi/2)-\delta))$.

By the continuous dependence of solutions on initial conditions,
$w(r;\alpha_\varepsilon)$ converges to 
$w(r;\lambda^{1/(p-1)})=\lambda^{1/(p-1)}\cos r(t)/\psi(r(t))$ uniformly on 
$[-t(z_1),t(z_1)]$ as $\varepsilon\to0^+$.
Hence, there exists $\varepsilon_p \in (0,\pi/2)$ such that
if $\varepsilon \in (0,\varepsilon_p)$, then 
$a=(\pi/2)-\varepsilon>z_1$ and
\begin{equation}\label{cosw1}
 (\cos r(t))^{1-p} \psi(r(t))^{p-1} w(t;\alpha_{\varepsilon})^{p-1} 
  \ge \lambda\left(1-\frac{\delta}{2}\right), \quad t \in [-t(z_1),t(z_1)]. 
\end{equation}
Hereafter we suppose that $\varepsilon \in (0,\varepsilon_p)$.
Let $\mu_2$ be the second eigenvalue of \eqref{LL} with 
$w(t)=w(t;\alpha_\varepsilon)$.
We claim that $\mu_2<0$.
Assume that $\mu_2\ge0$.
Then from \eqref{cosw1}
\begin{equation*}
 p(\cos r(t))^{1-p} \psi(r(t))^{p+3} w(t;\alpha_\varepsilon)^{p-1} + \mu_2
 > p\lambda(1-\delta) \psi(r(t))^4, \quad r \in [-t(z_1),t(z_1)]. 
\end{equation*}
The Sturm comparison theorem implies that 
every eigenfunction of \eqref{LL} with $w(t)=w(t;\alpha_\varepsilon)$ 
corresponding to $\mu_2$ has at least two zero in 
$(-t(z_1),t(z_1))\subset (-t(a),t(a))$, which is a contradiction (from Sturm-Liouville theory, the second eigenfunction of \eqref{LL} has exactly one zero in $(-t(a),t(a))$).
Therefore, $\mu_2<0$ as claimed.
Recalling Lemma \ref{lemMI1}, we see that $w(t;\alpha_\varepsilon)$ is not 
a least energy solution of the Rayleigh quotient \eqref{Reyleigh-ne}.
Since $w(t;\alpha_\varepsilon)$ is a unique even function solution of 
\eqref{eqw}, we conclude that every least energy solution is non-even function 
solution of \eqref{eqw}.
Since we know that if $w(r)$ is a solution of \eqref{eqw}, then so is $w(-r)$. 
Thus \eqref{eqw} has at least two non-even function solutions.
\end{proof}
We show the numerical experimental results for the case $\lambda=1$. So, in this case $3/\lambda+1=4$. In each figure, the left-hand-side figure shows the graph of
the solution $w$ of \eqref{eqw}, while the right-hand-side figure shows the graph of the solution $u$ of \eqref{eqr}.
 \begin{figure}[htb]
 \begin{center}
  \includegraphics[scale=0.6]{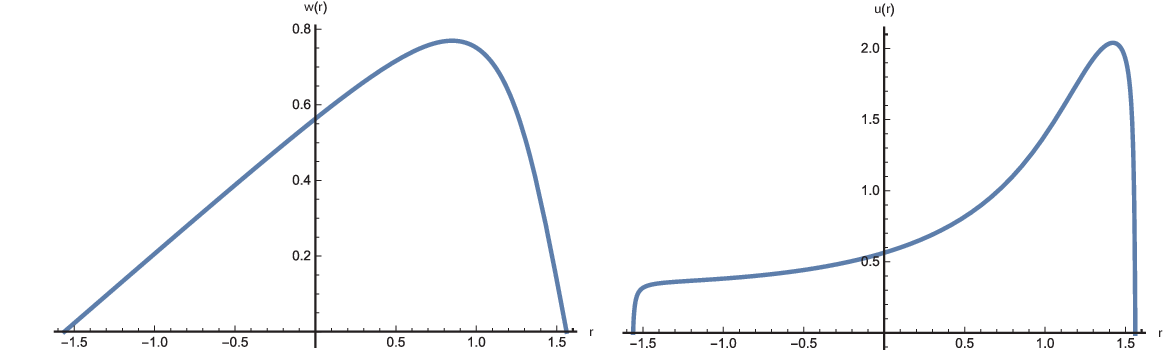}
\caption{The graphs of $r\mapsto w(r)$ and $r\mapsto u(r)$ for $r\in (0,a)$ when $\lambda=1$, $p=4.5$ and $a=\pi/2-0.01$. 
The value of the Rayleigh quotient is $R(w)=1.305$.}\label{Ex1}
 \end{center}
\end{figure}
\begin{figure}[htb]
 \begin{center}
  \includegraphics[scale=0.6]{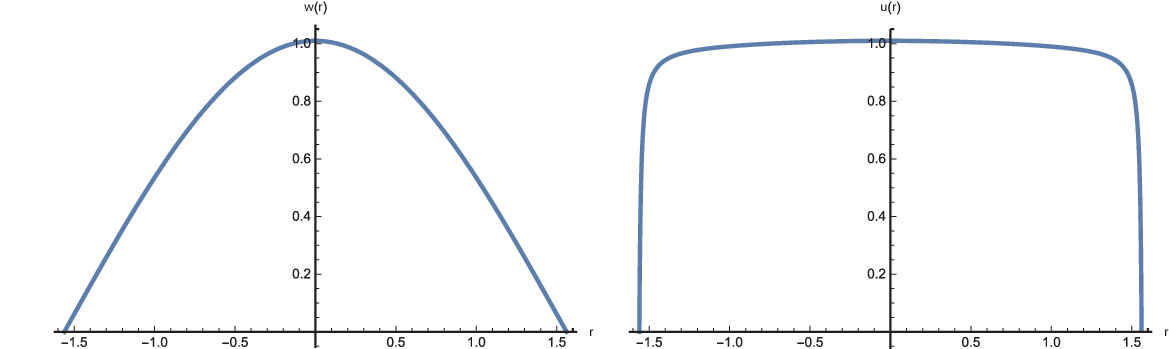}
\caption{The graphs of $r\mapsto w(r)$ and $r\mapsto u(r)$ for $r\in (0,a)$ when $\lambda=1$, $p=4.5$ and $a=\pi/2-0.01$. 
The value of the Rayleigh quotient is $R(w)=1.350$.}\label{Ex2}
 \end{center}
\end{figure}
\begin{figure}[htb]
 \begin{center}
  \includegraphics[scale=0.6]{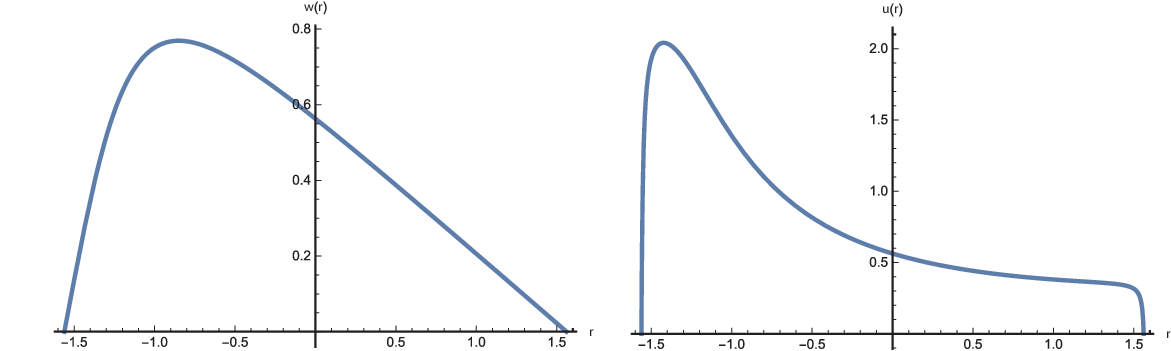}
\caption{The graphs of $r\mapsto w(r)$ and $r\mapsto u(r)$ for $r\in (0,a)$ when $\lambda=1$, $p=4.5$ and $a=\pi/2-0.01$. 
The value of the Rayleigh quotient is $R(w)=1.305$.}\label{Ex3}
 \end{center}
\end{figure}
Figures \ref{Ex1}, \ref{Ex2} and \ref{Ex3} are the results of $p=4.5>3/\lambda+1=4$ and $a=\pi/2-0.01$. Figures \ref{Ex1} and \ref{Ex3} represent non-even function solutions, while Figure \ref{Ex2} represents an even function solution. 
We can see that the Rayleigh quotient $R$ of non-even function solutions are
lower than that of the even function solution. Indeed the former has the value $R(w)=1.305$ while the latter is $R(w)=1.350$. 

\subsection{Multiple existence of even function solutions for $p>5$}
In this sub-section, we give a proof for a part of (iii) and (vi) of Theorem \ref{thmmain}.
\begin{lem}\label{Ea}
Let $\lambda>0$ and $p>5$. Then,
$E(b)\rightarrow 0$ as $b\rightarrow t(\pi/2)$, that is, $\varepsilon\rightarrow 0$, where $E(b)$ is the function defined by \eqref{Reyleigh}.
\end{lem}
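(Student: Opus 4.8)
The plan is to prove $E(b)\to 0$ by exhibiting, for $b$ close to $T:=t(\pi/2)$, an even admissible test function whose Rayleigh quotient $R$ is small; since $E(b)\ge 0$ by definition, this forces $E(b)\to 0$. The mechanism is concentration near the boundary: the weight $h(t)=(\cos r(t))^{1-p}\psi(r(t))^{p+3}$ blows up as $t\to T$, because $1-p<0$ while $\cos r(t)\to 0$, so a test function supported in thin layers near $t=\pm b$ makes the denominator of $R$ large relative to the numerator. The hypothesis $\lambda>0$ guarantees $T<\infty$ and the non-degeneracy used below (for $\lambda<1$ one has $\tfrac{\pi}{2}\sqrt{1-\lambda}<\tfrac\pi2$), and the threshold $p=5$ enters precisely through the scaling balance.

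First I would record the blow-up rate of $h$ near $T$. Since $dt/dr=1/\psi(r)^2$ is finite and nonzero at $r=\pi/2$ (indeed $\psi(\pi/2)=\cos(\tfrac{\pi}{2}\sqrt{1-\lambda})>0$ for $0<\lambda<1$, and $\psi\equiv1$ for $\lambda=1$), the inverse function $r(t)$ is smooth near $t=T$ with $r(T)=\pi/2$ and $r'(T)=\psi(\pi/2)^2\neq0$. Hence $\cos r(t)=\sin(\tfrac{\pi}{2}-r(t))\le c_2(T-t)$ for $t$ near $T$, and since $\psi(r(t))$ stays bounded below there, there is $c_1>0$ with
\[
 h(t)\ \ge\ c_1\,(T-t)^{1-p},\qquad t\ \text{near}\ T .
\]
This one-sided bound is all that is needed, because in estimating $R$ from above the numerator can be computed exactly while the denominator only requires a lower bound.

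Then I would fix a profile $\chi\in C_c^\infty(\mathbb{R})$ with $\chi\ge0$, $\chi\not\equiv0$ and $\operatorname{supp}\chi\subset(0,2)$, set $\eta:=T-b$, and define the even test function
\[
 w_b(t)=\chi\!\left(\frac{b-|t|}{\eta}\right),\qquad t\in(-b,b).
\]
For $b$ near $T$ one has $b/\eta>2$, so $w_b\equiv 0$ near $t=0$ and $w_b(\pm b)=\chi(0)=0$; thus $w_b$ is an admissible even element of $H^1_0(-b,b)$, supported in two disjoint layers of width $2\eta$ near $\pm b$. A change of variables gives the numerator exactly, $\int_{-b}^{b}(w_b)_t^2\,dt=\tfrac{2}{\eta}\|\chi'\|_{L^2}^2$, while the weight bound (valid since $T-t\in(\eta,3\eta)$ on the support) yields
\[
 \int_{-b}^{b}h(t)\,|w_b(t)|^{p+1}\,dt\ \ge\ 2c_1\,\eta^{2-p}\int_0^2(1+s)^{1-p}\chi(s)^{p+1}\,ds\ =\ C\,\eta^{2-p},\qquad C>0 .
\]
Combining these, $R(w_b)\le C'\,\eta^{-1}\cdot\eta^{-2(2-p)/(p+1)}=C'\,\eta^{(p-5)/(p+1)}$. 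Since $p>5$ the exponent $(p-5)/(p+1)$ is positive, so $E(b)\le R(w_b)\to0$ as $b\to T$, i.e. as $\varepsilon\to0$.

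The main obstacle is selecting the correct scaling $\eta\sim T-b$: a fixed-width or a too-narrow bump fails, and only at this critical width do the competing powers of $\eta$ in numerator and denominator combine into the clean exponent $(p-5)/(p+1)$, which is positive exactly when $p>5$. The one genuinely technical ingredient is the boundary estimate $h(t)\gtrsim(T-t)^{1-p}$, which follows from the smoothness and non-degeneracy of $r(t)$ at $r=\pi/2$.
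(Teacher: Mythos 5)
Your proof is correct and takes essentially the same approach as the paper: both bound $E(b)$ from above by the Rayleigh quotient of an explicit even test function and exploit the endpoint blow-up $h(t)\gtrsim (T-t)^{1-p}$, $T=t(\pi/2)$, with the hypothesis $p>5$ entering through exactly the same scaling balance. The only difference is the shape of the test function — the paper uses a plateau of height $\sqrt{\varepsilon}$ with linear ramps near $\pm b$ (numerator bounded, denominator divergent), while you use bumps of width $\eta=T-b$ concentrated at the endpoints (numerator $\sim\eta^{-1}$, denominator $\sim\eta^{2-p}$) — and since the quotient is homogeneous of degree zero both choices yield the same decay rate $R\lesssim\eta^{(p-5)/(p+1)}\to0$.
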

\begin{proof}
Define $w_{\varepsilon}\in H^{1}_{0}(-b,b)$ as
\begin{align*}
w_{\varepsilon}(t)=
\begin{cases}
\sqrt{\varepsilon},& \vert t\vert \leq t(\frac{\pi}{2}-2\varepsilon)\\
\frac{\sqrt{\varepsilon}(b-|t|)}{b-t\left(\frac{\pi}{2}-2\varepsilon\right)},& t(\frac{\pi}{2}-2\varepsilon)\leq |t|\leq t(\frac{\pi}{2}-\varepsilon)=b.
\end{cases}
\end{align*}
Since $(t(r))_r=(\psi(r))^{-2}$,
the numerator part of the Rayleigh quotient $R$ is computed as
\begin{align*}
\int_{-b}^{b}\vert (w_{\varepsilon})_{t}\vert^{2}dt
&=2\int_{0}^{b}\vert (w_{\varepsilon})_{t}\vert^{2}dt \\
&=2\int_{t(\frac{\pi}{2}-2\varepsilon)}^{t(\frac{\pi}{2}-\varepsilon)}
\vert -\frac{\sqrt{\varepsilon}}{t\left(\frac{\pi}{2}-\varepsilon\right)-t\left(\frac{\pi}{2}-2\varepsilon\right)}\vert^{2}dt \\
&=2\frac{\varepsilon}{t(\frac{\pi}{2}-\varepsilon)-t(\frac{\pi}{2}-2\varepsilon)} 
\to 2\psi(\pi/2)^{2} \quad \mbox{as} \ \varepsilon\to0.
\end{align*}
Since $\lambda>0$, it holds that $\psi(r)=\cos\left(r\sqrt{1-\lambda}\right)\geq \cos\left(\frac{\pi}{2}\sqrt{1-\lambda}\right)=\psi(\pi/2)$ for $r\in[0,\pi/2]$.
By using the relation $\cos r\leq (\pi/2) -r$ on $[0,\pi/2]$,
the denominator part of $R$ is estimated as
\begin{align*}
\int_{-b}^{b}\left(\cos r(t)\right)^{1-p}\psi(r(t))^{p+3}\vert w_{\varepsilon}\vert^{p+1}dt
&=2\int_{0}^{b}\left(\cos r(t)\right)^{1-p}\psi(r(t))^{p+3}\vert w_{\varepsilon}\vert^{p+1}dt \\
&\geq 2\int_{0}^{t(\frac{\pi}{2}-2\varepsilon)} \left(\cos r(t)\right)^{1-p}\psi(r(t))^{p+3}\vert \sqrt{\varepsilon}\vert^{p+1}dt \\
&= 2\varepsilon^{\frac{p+1}{2}} \int_{0}^{\frac{\pi}{2}-2\varepsilon} \left(\cos r\right)^{1-p}\psi(r)^{p+3}(t(r))_r dr \\
&= 2\varepsilon^{\frac{p+1}{2}} \int_{0}^{\frac{\pi}{2}-2\varepsilon} \left(\cos r\right)^{1-p}\psi(r)^{p+1} dr \\
&\ge 2\psi\left(\frac{\pi}{2}\right)^{p+1} \varepsilon^{\frac{p+1}{2}} \int_{0}^{\frac{\pi}{2}-2\varepsilon} 
  \left(\frac{\pi}{2}-r\right)^{1-p}dr \\
&= \frac{2\psi\left(\frac{\pi}{2}\right)^{p+1} \varepsilon^{\frac{p+1}{2}}}{p-2} 
  \left( (2\varepsilon)^{2-p} -(\pi/2)^{2-p} \right) \\
&=\frac{2\psi\left(\frac{\pi}{2}\right)^{p+1}}{p-2}\left(2^{2-p}\varepsilon^{-\frac{p-5}{2}}-(\pi/2)^{2-p}\varepsilon^{\frac{p+1}{2}}\right)\rightarrow \infty\quad\text{ as }\varepsilon \rightarrow 0.
\end{align*} 
Thus, we obtain $R(w_{\varepsilon})\rightarrow 0$ as $\varepsilon\rightarrow 0$.

Let $W_{\varepsilon}$ be a minimizer of problem \eqref{Reyleigh}. From the above argument, we obtain
\begin{align*}
E(b)=R(W_{\varepsilon})\leq R(w_{\varepsilon})\rightarrow 0\quad \text{ as } \varepsilon\rightarrow 0. 
\end{align*}
\end{proof}

Now we give a proof of the multiple existence of the even function solutions for (iii) and (vi) of Theorem \ref{thmmain}.

\begin{proof}[Proof of the multiple existence of the even function solutions for \textup{(iii)} and \textup{(vi)} of Theorem \ref{thmmain}]
Assume $p>5$.
First we prove that, for each $b\in(0,t(\pi/2))$, there exists 
$\gamma_1>\lambda^{1/(p-1)}$ such that $Z(\gamma_1)=b$. 
We define
\begin{equation*}
 \alpha^* = \inf\{ \beta>0 \,|\, 
  0 < Z(\alpha) < t(\pi/2) \ \mbox{for} \ \alpha>\beta \}.
\end{equation*}
From Lemma \ref{zzero}, it follows that $0<Z(\alpha)<t(\pi/2)$ for all 
sufficiently large $\alpha>\lambda^{1/(p-1)}$.
Moreover, $Z(\lambda^{1/(p-1)})=t(\pi/2)$.
Hence, $\lambda^{1/(p-1)} \le \alpha^*<\infty$.
Since $Z(\alpha)$ is continuous at $\alpha$ for which $0<Z(\alpha)<t(\pi/2)$,
we see that $Z(\alpha)$ is continuous on $(\alpha^*,\infty)$.
From Lemma \ref{endpositive}, it follows that $Z(\alpha^*)$ exists in $(0,t(\pi/2)]$.
We prove $Z(\alpha^*)=t(\pi/2)$.
Suppose that $0<Z(\alpha^*)<t(\pi/2)$.
Then $Z(\alpha)$ is continuous at $\alpha=\alpha^*$,
and hence $0<Z(\alpha)<t(\pi/2)$ on $(\alpha^*-\delta,\alpha^*+\delta)$ for some 
$\delta>0$.
Since $0<Z(\alpha)<t(\pi/2)$ for $\alpha>\alpha^*$, we have 
$0<Z(\alpha)<t(\pi/2)$ for $\alpha>\alpha^*-\delta$.
This contradicts the definition of $\alpha^*$.
Therefore, $Z(\alpha^*)=\pi/2$.
From the continuous dependence of solutions on initial conditions,
it follows that $w(t;\alpha)$ converge to $w(t;\alpha^*)$ as 
$\alpha\to{\alpha^*}^+$ uniformly on each interval $[0,t_0]\subset[0,t(\pi/2))$.
Since $w(t;\alpha^*)>0$ for $0\le t <t(\pi/2)$, we see that
$Z(\alpha)\to t(\pi/2)$ as $\alpha\to{\alpha^*}^+$.
By Lemma \ref{zzero}, for each $b\in(0,t(\pi/2))$, there exists 
$\gamma_1>\alpha^*$ such that $Z(\gamma_1)=b$.

Let $W_{\varepsilon}$ be a least energy solution of the Rayleigh quotient \eqref{Reyleigh}.
Since $W_{\varepsilon}$ satisfies \eqref{eqw}, it holds
\begin{align*}
&\int_{-b}^{b}\left(\cos r(t)\right)^{1-p}\psi(r(t))^{p+3}
\left| W_{\varepsilon}\right|^{p+1}dt = \int_{-b}^{b}\left| \left(W_{\varepsilon}\right)_{t}\right|^{2}dt, 
\end{align*}
which implies
\begin{align*}
E(b)=R(W_{\varepsilon})=
\left(\int_{-b}^{b}\left| \left(W_{\varepsilon}\right)_{t}\right|^{2}dt \right)^{\frac{p-1}{p+1}}=
2^{\frac{p-1}{p+1}}
\left(\int_{0}^{b}\left| \left(W_{\varepsilon}\right)_{t}\right|^{2}dt \right)^{\frac{p-1}{p+1}}.
\end{align*}
Thus, from Lemma \ref{Ea}, we obtain
\begin{align*}
\int_{0}^{b}\left| \left(W_{\varepsilon}\right)_{t}\right|^{2}dt\rightarrow 0 \quad \text{ as } \varepsilon \rightarrow 0.
\end{align*}
Hence, it holds
\begin{align}\label{Wzero}
W_{\varepsilon}(0)=
-\int_{0}^{b}\left(W_{\varepsilon}\right)_{t}dt
=\int_{0}^{b}\left|\left(W_{\varepsilon}\right)_{t}\right|dt
\leq \sqrt{b}\left(\int_{0}^{b}\left|\left(W_{\varepsilon}\right)_{t}\right|^2 dt\right)^{\frac{1}{2}}
\rightarrow 0 \quad \text{ as } \varepsilon \rightarrow 0. 
\end{align}
Therefore, $0<W_{\varepsilon_1}(0)<\lambda^{1/(p-1)}/2$ for some $\varepsilon_1>0$.
We set $\alpha_1=W_{\varepsilon_1}(0)$ and define 
\begin{equation*}
 \alpha_* = \sup\{ \beta>0 \,|\, 
  0 < Z(\alpha) < t(\pi/2) \ \mbox{for} \ \alpha_1 \le \alpha<\beta \}.
\end{equation*}
Since $Z(\lambda^{1/(p-1)})=t(\pi/2)$, we have 
$\alpha_1<\alpha_*\le\lambda^{1/(p-1)}$.
By the same argument as above, we conclude that, 
for each $\varepsilon \in(0,\varepsilon_1)$,
there exists $\gamma_2 \in (\alpha_1,\alpha_*)$ such that $Z(\gamma_2)=b$.

By recalling \eqref{Wzero}, there exists $\varepsilon_2 \in (0,\varepsilon_1)$
such that if $\varepsilon\in(0,\varepsilon_2)$, then 
$W_\varepsilon(0)<\alpha_1$.
For each $\varepsilon \in(0,\varepsilon_2)$, 
we set $\gamma_3=W_\varepsilon(0)$. 
Then $Z(\gamma_3)=b$.

We have proven that if $\varepsilon \in(0,\varepsilon_2)$, 
then (5) with $b=t(\frac{\pi}{2}-\varepsilon)$ has three distinct even function 
solutions $w(t;\gamma_1)$, $w(t;\gamma_2)$ and $w(t;\gamma_3)$.
\end{proof}

 \begin{figure}[htb]
 \begin{center}
  \includegraphics[scale=0.6]{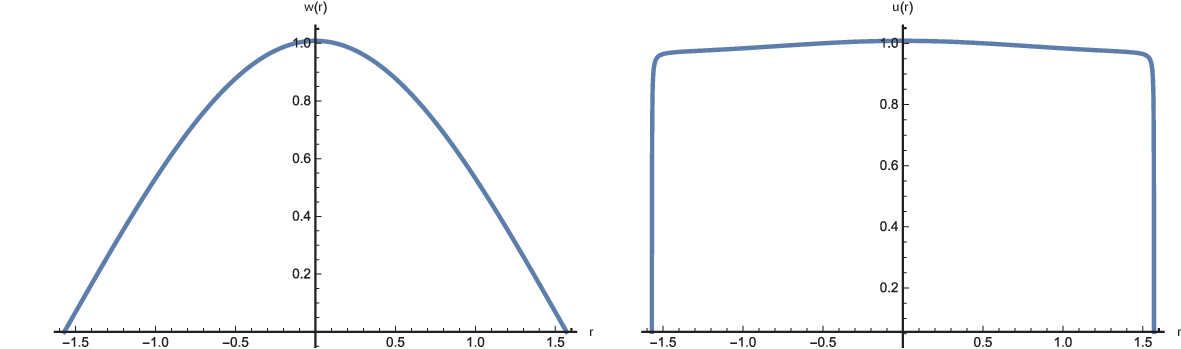}
\caption{The graphs of $r\mapsto w(r)$ and $r\mapsto u(r)$ for $r\in (0,a)$ when $\lambda=1$, $p=9$ and $a=1.57$. 
The value of the Rayleigh quotient is $R(w)=1.436$.}\label{Ex4}
 \end{center}
\end{figure}
\begin{figure}[H]
 \begin{center}
  \includegraphics[scale=0.6]{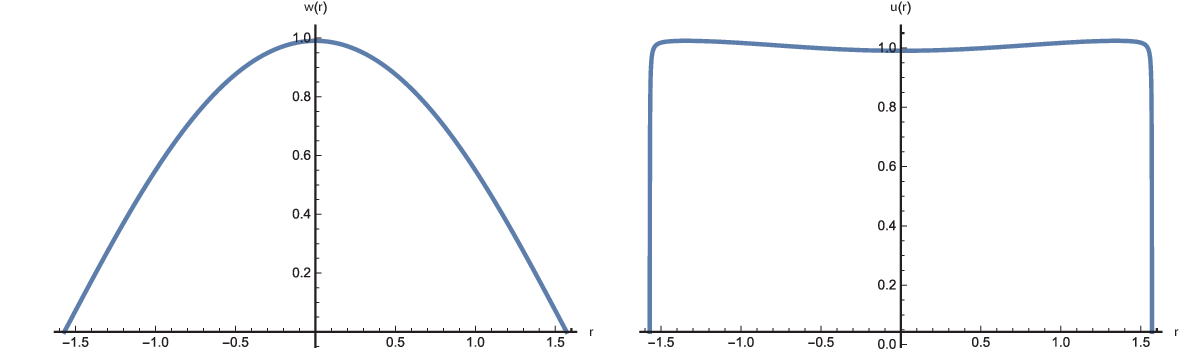}
  \caption{The graphs of $r\mapsto w(r)$ and $r\mapsto u(r)$ for $r\in (0,a)$ when $\lambda=1$, $p=9$ and $a=1.57$. 
The value of the Rayleigh quotient is $R(w)=1.437$.}\label{Ex5}
 \end{center}
\end{figure}
\begin{figure}[htb]
 \begin{center}
  \includegraphics[scale=0.6]{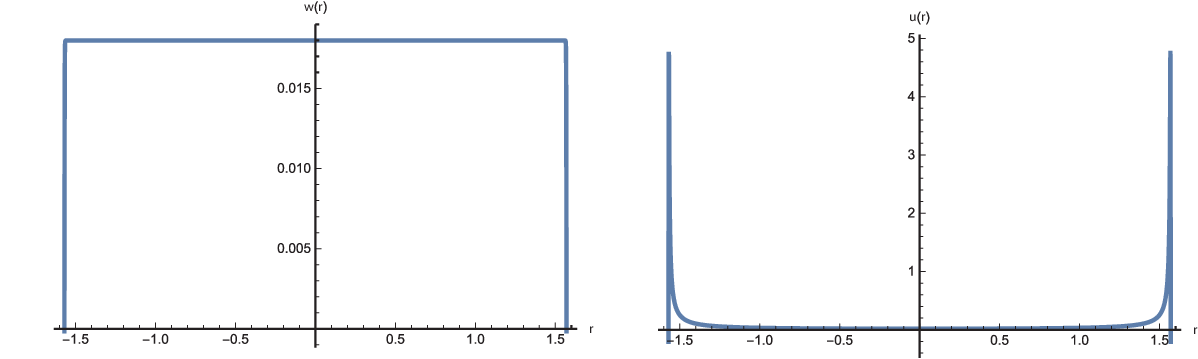}
\caption{The graphs of $r\mapsto w(r)$ and $r\mapsto u(r)$ for $r\in (0,a)$ when $\lambda=1$, $p=9$ and $a=1.57$. 
The value of the Rayleigh quotient is $R(w)=0.285$.}\label{Ex6}
 \end{center}
\end{figure}
We show the numerical experimental results for the case $\lambda=1$. 
Figures \ref{Ex4}, \ref{Ex5} and \ref{Ex6} are the results of $p=9$ and $a=1.57$. Figures \ref{Ex4}, \ref{Ex5} and \ref{Ex6} represent even function solutions.
We can see that the Rayleigh quotient $R$ of the solution of Figure \ref{Ex6} is smallest among three solutions. 
The solution of Figure \ref{Ex6} is the least energy solution in even function space.

\subsection{Multiple existence of non-even function solutions for $p>5$}
In this subsection, a part of assertion (iii) and (vi) of Theorem \ref{thmmain} is proved.
It seems to be difficult to verify this assertion in the same way as the case $3/\lambda+1<p\leq 5$, since the least energy solution in Figure \ref{Ex6} 
is not resembling the exact solution $w(t)=\lambda^{1/(p-1)}/\phi(r(t))$. 
So, we take another method, which investigates the sign of the second variation of $R$.
Let $W$ be a positive even function solution of \eqref{eqw} and 
\begin{align*}
\varphi(t,s):=\left(1+s f(t)\right)W(t), \quad -b<t<b,
\end{align*}
where
$\vert s \vert$ is small enough and $f$ is an odd function satisfying
\begin{align*}
 f(0)=0,\quad tf(t)>0 \ \mbox{for} \ t\neq 0.
\end{align*}
To compute the Rayleigh quotient, we define 
\begin{align*}
N(s)=\int_{-b}^{b}\varphi_{t}(t,s)^{2}dt, \quad 
D(s)=\left(\int_{-b}^{b}h(t)\varphi(t,s)^{p+1}dt\right)^{\frac{2}{p+1}}
\end{align*}
and set $R(s)=N(s)/D(s)$. 
Since $\varphi_{t}(t,s)=W_{t}(t)+s \left(f(t)W(t)\right)_{t}$, we obtain
\begin{align*}
\varphi_{t}(t,s)^{2}=W_{t}(t)^{2}+2sW_{t}(t)\left(f(t)W(t)\right)_{t}+s^{2}\left[\left(f(t)W(t)\right)_{t}\right]^2.
\end{align*}
Since $W_{t}(t)\left(f(t)W(t)\right)_{t}$ is an odd function, we have 
\begin{align}\label{N}
N(s)&=\int_{-b}^{b}W_{t}(t)^2dt+2s\int_{-b}^{b}W_{t}(t)\left(f(t)W(t)\right)_{t}dt+s^{2}\int_{-b}^{b}\left[\left(f(t)W(t)\right)_{t}\right]^2dt\\
&=\int_{-b}^{b}W_{t}(t)^2dt+s^{2}\int_{-b}^{b}\left[\left(f(t)W(t)\right)_{t}\right]^2dt\nonumber.
\end{align}
Hence, we obtain
\begin{align}\label{Nee}
N_{s}(0)=0,\quad N_{ss}(0)=2\int_{-b}^{b}\left[\left(f(t)W(t)\right)_{t}\right]^2dt.
\end{align}
For the denominator 
\begin{align*}
 D(s)=\left(\int_{-b}^{b}h(t)\left(1+s f(t)\right)^{p+1}W(t)^{p+1}dt\right)^{\frac{2}{p+1}},
\end{align*}
we have 
\begin{align}\label{De}
D_{s}(s)
&=\frac{2}{p+1}\left(\int_{-b}^{b}h(t)\left(1+s f(t)\right)^{p+1}W(t)^{p+1}dt\right)^{\frac{2}{p+1}-1}\cdot\\
&\quad\quad\quad\quad\left(\int_{-b}^{b}(p+1)h(t)\left(1+s f(t)\right)^{p}f(t)W(t)^{p+1}dt\right)\nonumber\\
&=2\left(\int_{-b}^{b}h(t)\left(1+s f(t)\right)^{p+1}W(t)^{p+1}dt\right)^{\frac{2}{p+1}-1}\cdot
\left(\int_{-b}^{b}h(t)\left(1+s f(t)\right)^{p}f(t)W(t)^{p+1}dt\right).\nonumber
\end{align}
Since $h(t)f(t)W(t)^{p+1}$ is an odd function, it holds $D_{s}(0)=0$.
Further, it holds
\begin{align*}
D_{ss}(s)=&2\left(\frac{2}{p+1}-1\right)
\left(\int_{-b}^{b}h(t)\left(1+s f(t)\right)^{p+1}W(t)^{p+1}dt\right)^{\frac{2}{p+1}-2}\cdot\\
&\quad\quad\quad\quad (p+1)\left(\int_{-b}^{b}h(t)\left(1+s f(t)\right)^{p}f(t)W(t)^{p+1}dt\right)^2\\
&+2\left(\int_{-b}^{b}h(t)\left(1+s f(t)\right)^{p+1}W(t)^{p+1}dt\right)^{\frac{2}{p+1}-1}\cdot\\
&\quad\quad\quad\quad p\left(\int_{-b}^{b}h(t)\left(1+s f(t)\right)^{p-1}f(t)^2W(t)^{p+1}dt\right),
\end{align*}
and hence
\begin{align}\label{Dee}
D_{ss}(0)=2p\left(\int_{-b}^{b}h(t)W(t)^{p+1}dt\right)^{\frac{2}{p+1}-1}\cdot\left(\int_{-b}^{b}h(t)f(t)^2W(t)^{p+1}dt\right).
\end{align}
Thus, we obtain
\begin{align}\label{Ree}
R_{s}(0)&=\frac{N_{s}(0)D(0)-N(0)D_{s}(0)}{D(0)^{2}}=0,\\
R_{ss}(0)&=\frac{N_{ss}(0)D(0)-N(0)D_{ss}(0)}{D(0)^{2}}-\frac{2D_{s}(0)}{D(0)^{3}}\left(N_{s}(0)D(0)-N(0)D_{s}(0)\right)\nonumber\\
&=\frac{N_{ss}(0)D(0)-N(0)D_{ss}(0)}{D(0)^{2}}.\nonumber
\end{align}
Putting 
\begin{align*}
I=\int_{-b}^{b}h(t)W(t)^{p+1}dt=\int_{-b}^{b}W_{t}(t)^{2}dt,
\end{align*}
we have
\begin{align}\label{secondv}
&N_{ss}(0)D(0)-N(0)D_{ss}(0)\\
&=2\left(\int_{-b}^{b}\left[\left(f(t)W(t)\right)_{t}\right]^{2}dt\right)\cdot I^{\frac{2}{p+1}}
-I\cdot 2pI^{\frac{2}{p+1}-1}\int_{-b}^{b}h(t)f(t)^{2}W(t)^{p+1}dt\nonumber\\
&=2I^{\frac{2}{p+1}}\left(\int_{-b}^{b}\left[\left(f(t)W(t)\right)_{t}\right]^{2}dt-
p\int_{-b}^{b}h(t)f(t)^{2}W(t)^{p+1}dt \right).\nonumber
\end{align}
Further, it holds that
\begin{align*}
&\int_{-b}^{b}\left[\left(f(t)W(t)\right)_{t}\right]^{2}dt\\
&=\left[\left(f(t)W(t)\right)\left(f(t)W(t)\right)_{t}\right]_{-b}^{b}-\int_{-b}^{b}\left(f(t)W(t)\right)\left(f(t)W(t)\right)_{tt}dt\\
&=-\int_{-b}^{b}f(t)f_{tt}(t)W(t)^2dt-2\int_{-b}^{b}f(t)f_{t}(t)W(t)W_{t}(t)dt-\int_{-b}^{b}f(t)^{2}W(t)W_{tt}(t)dt.
\end{align*}
The second term of above equality is rewritten as
\begin{align*}
2\int_{-b}^{b}f(t)f_{t}(t)W(t)W_{t}(t)dt 
& =\int_{-b}^{b}\left(f(t)^{2}\right)_{t}W(t)W_{t}(t)dt \\
&=-\int_{-b}^{b}f(t)^{2}\left(W(t)W_{t}(t)\right)_{t}dt\\
&=-\int_{-b}^{b}f(t)^{2}W_{t}(t)^{2}dt-\int_{-b}^{b}f(t)^{2}W(t)W_{tt}(t)dt.
\end{align*}
Hence, we obtain
\begin{align}\label{fphi}
\int_{-b}^{b}\left[\left(f(t)W(t)\right)_{t}\right]^{2}dt
=-\int_{-b}^{b}f(t)f_{tt}(t)W(t)^2dt+\int_{-b}^{b}f(t)^{2}W_{t}(t)^{2}dt.
\end{align}
Substituting \eqref{fphi} to \eqref{secondv}, we get
\begin{align*}
N_{ss}(0)D(0)-N(0)D_{ss}(0)=2I^{\frac{2}{p+1}}F(b),
\end{align*}
where
\begin{align}\label{F}
F(b)=-\int_{-b}^{b}f(t)f_{tt}(t)W(t)^{2}dt+\int_{-b}^{b}f(t)^{2}W_{t}(t)^{2}dt-
p\int_{-b}^{b}h(t)f(t)^{2}W(t)^{p+1}dt.
\end{align}
From now on, we fix $f(t)=t$. Then, it holds:
\begin{align}\label{Fa}
F(b)&= \int_{-b}^{b}t^{2}W_{t}(t)^{2}dt-p\int_{-b}^{b}t^{2}h(t)W(t)^{p+1}dt\\
&\leq b^{2}\int_{-b}^{b}W_{t}(t)^{2}dt-p\int_{-b}^{b}t^{2}h(t)W(t)^{p+1}dt\nonumber\\
&=b^{2}\int_{-b}^{b}h(t)W(t)^{p+1}dt-p\int_{-b}^{b}t^{2}h(t)W(t)^{p+1}dt\nonumber\\
&=\int_{-b}^{b}\left(b^{2}-pt^{2}\right)h(t)W(t)^{p+1}dt\nonumber\\
&=2\int_{0}^{b}\left(b^{2}-pt^{2}\right)h(t)W(t)^{p+1}dt.\nonumber
\end{align}

Now, we give a proof for the multiple existence of non-even function solutions of assertion (iii) and (vi) of Theorem \ref{thmmain} for the case $p>5$.
\begin{proof}
[Proof of multiple existence of non-even function solutions of \textup{(iii)} and \textup{(vi)} of Theorem \ref{thmmain}]
Let $p>5$.
First we claim that 
\begin{equation*}
 \lim_{s\to t(\pi/2)} \int_{0}^{s} 
   \left(t\Bigl(\dfrac{\pi}{2}\Bigr)^{2}-pt^{2}\right)h(t)dt = -\infty.
\end{equation*}
Let $s\in(0,t(\pi/2))$ be sufficiently close to $t(\pi/2)$ for which 
$t(\pi/2)/2<s<t(\pi/2)$.
Since
\begin{multline*}
 \int_{0}^{s} 
   \left(t\Bigl(\dfrac{\pi}{2}\Bigr)^{2}-pt^{2}\right)h(t)dt \\
 = \int_{0}^{t(\pi/2)/2} 
   \left(t\Bigl(\dfrac{\pi}{2}\Bigr)^{2}-pt^{2}\right)h(t)dt
 + \int_{t(\pi/2)/2}^{s} 
   \left(t\Bigl(\dfrac{\pi}{2}\Bigr)^{2}-pt^{2}\right)h(t)dt,
\end{multline*}
it is sufficient to prove that
\begin{equation}
 \lim_{s\to\pi/2} \int_{t(\pi)/2}^{s} 
   \left(t\Bigl(\dfrac{\pi}{2}\Bigr)^{2}-pt^{2}\right)h(t)dt = -\infty.
 \label{limint}
\end{equation}
We observe that
\begin{align*}
 \int_{t(\pi/2)/2}^{s}
  \left(t\Bigl(\dfrac{\pi}{2}\Bigr)^{2}-pt^{2}\right)h(t)dt
 & \le \left(t\Bigl(\dfrac{\pi}{2}\Bigr)^{2}-\frac{p}{4}t\Bigl(\dfrac{\pi}{2}\Bigr)^{2}\right) \int_{t(\pi/2)/2}^{s} (\cos r(t))^{1-p}\psi(r(t))^{p+3} dt \\
 & = -\frac{p-4}{4} t\Bigl(\dfrac{\pi}{2}\Bigr)^{2} 
   \int_{t(\pi/2)/2}^{s} (\cos r(t))^{1-p}\psi(r(t))^{p+3} dt \\
 & \le -\frac{p-4}{4} t\Bigl(\dfrac{\pi}{2}\Bigr)^{2} \psi(r(s))^{p+1}
   \int_{t(\pi/2)/2}^{s} (\cos r(t))^{1-p}\psi(r(t))^{2} dt \\
 & = -\frac{p-4}{4} t\Bigl(\dfrac{\pi}{2}\Bigr)^{2} \psi(r(s))^{p+1}
   \int_{r(t(\pi/2)/2)}^{r(s)} (\cos r)^{1-p} dr \\
 & \le -\frac{p-4}{4} t\Bigl(\dfrac{\pi}{2}\Bigr)^{2} \psi(\pi/2)^{p+1}
   \int_{r(t(\pi/2)/2)}^{r(s)} (\cos r)^{1-p} \sin r dr \\
 & = -\frac{p-4}{4} t\Bigl(\dfrac{\pi}{2}\Bigr)^{2} \psi(\pi/2)^{p+1}
      \frac{1}{p-2} \left( (\cos r(s))^{2-p} - C \right)
\end{align*}
for some constant $C>0$, which means that \eqref{limint} holds.
Consequently, we can take $c\in(t(\pi/2)/2,t(\pi/2))$ such that 
\begin{equation*} 
 \int_{0}^{c} 
   \left(t\Bigl(\dfrac{\pi}{2}\Bigr)^{2}-pt^{2}\right)h(t)dt \le -1.
\end{equation*}

Set $W=W_{\varepsilon}$, where $W_\varepsilon$ is a least energy 
solution of the Rayleigh quotient \eqref{Reyleigh} as in the proof of 
the multiple existence of the even function solutions.
Next we will prove that  $W(t)/W(0)$ converges to $1$ uniformly on 
$[0,c]$ as $\varepsilon\to0$. 
We note that $c<b<t(\pi/2)$ if $\varepsilon>0$ is sufficiently small.
We have
\begin{equation*}
 W(t) - W(0) = -\int_0^t \int_0^\tau h(\sigma) W(\sigma)^p d\sigma d\tau,
 \quad t \in [0,c].
\end{equation*}
Since $W(t)$ is positive and decreasing on $[0,b)$, we observe that
\begin{align*}
 \left| \frac{W(t)}{W(0)} - 1 \right|
  & \le W(0)^{p-1} \max_{\sigma\in[0,c]} h(\sigma) 
        \int_0^t \int_0^\tau d\sigma d\tau \\
  & \le \frac{1}{2}c^2 W(0)^{p-1} \max_{\sigma\in[0,c]} h(\sigma), 
    \quad t \in [0,c].
\end{align*}
Therefore, $W(t)/W(0)$ converges to $1$ uniformly on 
$[0,c]$ as $\varepsilon\to0$ (recall that $W(0)=W_{\varepsilon}(0)\rightarrow 0$ as $\varepsilon\rightarrow 0$ form \eqref{Wzero}).

Let $c \in (t(\pi/2)/2,t(\pi/2))$.
We note that $t(\pi/2)^2-pt^2<0$ for $c\le t \le t(\pi/2)$.
Let $\varepsilon>0$ be sufficiently small for which $c<b<t(\pi/2)$.
From \eqref{Fa} it follows that  
\begin{align*}
 F(b) & \le 2 \int_{0}^{b}\left(t\Bigl(\dfrac{\pi}{2}\Bigr)^{2}-pt^2\right)h(t)W(t)^{p+1}dt \\
      & \le 2 \int_{0}^{c}\left(t\Bigl(\dfrac{\pi}{2}\Bigr)^{2}-pt^2\right)h(t)W(t)^{p+1}dt \\
 & = 2 W(0)^{p+1} \int_{0}^{c}\left(t\Bigl(\dfrac{\pi}{2}\Bigr)^{2}-pt^2\right)h(t)
                                    \left(\frac{W(t)}{W(0)}\right)^{p+1}dt,
\end{align*}
Since
\begin{equation*}
 \lim_{\varepsilon\to0^+} 
  \int_{0}^{c}\left(t\Bigl(\dfrac{\pi}{2}\Bigr)^{2}-pt^2\right)h(t)
                                    \left(\frac{W(t)}{W(0)}\right)^{p+1}dt
 = \int_{0}^{c}\left(t\Bigl(\dfrac{\pi}{2}\Bigr)^{2}-pt^2\right)h(t) dt \le -1,
\end{equation*}
we conclude that $F(b)<0$ for all sufficiently small $\varepsilon>0$.
Thus $W$ is not a minimizer of \eqref{Reyleigh-ne}
and the infimum of \eqref{Reyleigh-ne} is attained by non-even functions.
\end{proof}

 \begin{figure}[htb]
 \begin{center}
  \includegraphics[scale=0.6]{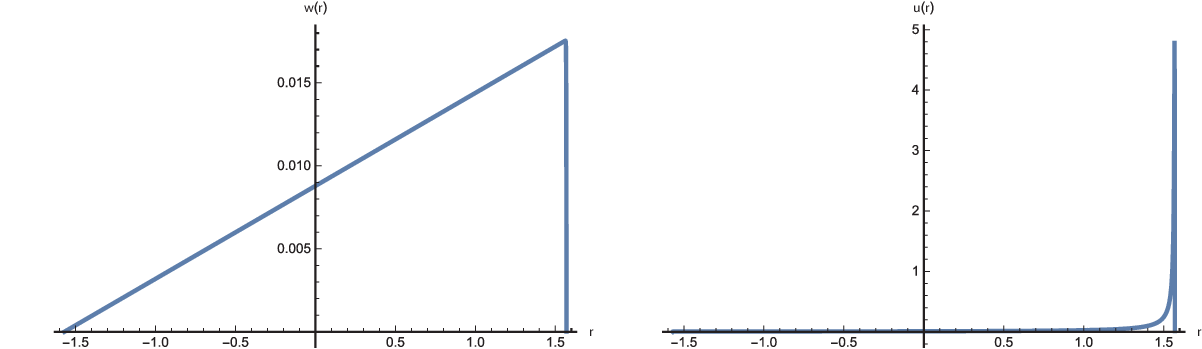}
\caption{The graphs of $r\mapsto w(r)$ and $r\mapsto u(r)$ for $r\in (0,a)$ when $\lambda=1$, $p=9$ and $a=1.57$. 
The value of the Rayleigh quotient is $R(w)=0.166$.}\label{Ex7}
 \end{center}
\end{figure}
\begin{figure}[htb]
 \begin{center}
  \includegraphics[scale=0.6]{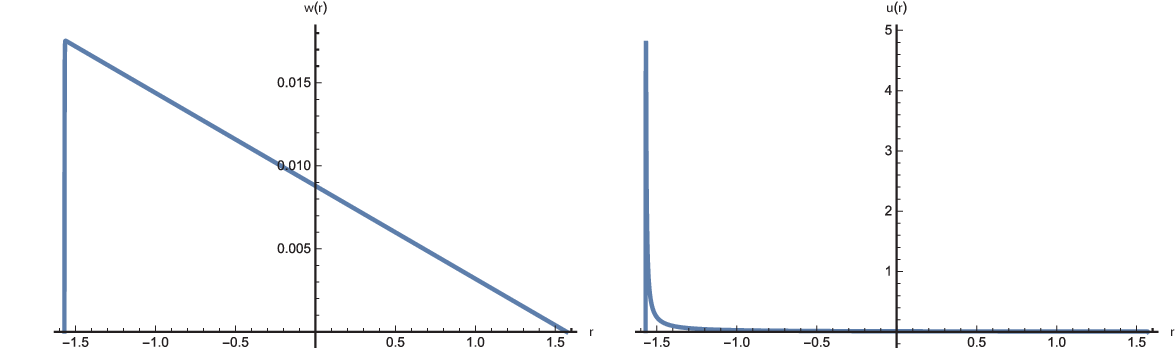}
\caption{The graphs of $r\mapsto w(r)$ and $r\mapsto u(r)$ for $r\in (0,a)$ when $\lambda=1$, $p=9$ and $a=1.57$. 
The value of the Rayleigh quotient is $R(w)=0.166$.}\label{Ex8}
 \end{center}
\end{figure}
We show the numerical experimental results for the case $\lambda=1$. Figures \ref{Ex7} and \ref{Ex8} are the results of $p=9$ and $a=1.57$. There parameters are the same as Figures \ref{Ex4}, \ref{Ex5} and \ref{Ex6}.
Figures \ref{Ex7} and \ref{Ex8} represent non-even function solutions.
As in the proof of multiple existence of non-even function solutions for (iii) and (vi) of Theorem \ref{thmmain} for $p>5$, the value of the Rayleigh quotient $R$ ($=0.166$) for these solutions are smaller than 
that of the least energy solution of the even-function space ($=0.285$, Figure \ref{Ex6}). 

\section{Appendix A (Proof of Lemma \ref{zzero})}

\begin{proof}[Proof of Lemma \ref{zzero}]
Let $\delta: 0<\delta< t(\pi/4)$ be arbitrary and
let $\alpha$ satisfy $\alpha^{p-1}>(p+2)\left(\cos\pi\sqrt{1-\lambda}/4\right)^{-p-3}\delta^{-2}$.
It is sufficient to prove that $w(t;\alpha)$ has a zero in $[0,\delta]$.
Assume to the contrary that $w(t;\alpha)>0$ for $0 \le t \le \delta$.
Then, since $w_{tt}(t;\alpha)<0$ for $t\in[0,\delta]$, we see that
$w(t;\alpha)$ is concave on $[0,\delta]$, which implies that
\begin{equation*}
 w(t;\alpha) \ge \frac{\alpha}{\delta}(\delta-t), \quad 0 \le t \le \delta.
\end{equation*}
Integrating the differential equation in \eqref{initw} on $[0,s]$ and 
integrating it on $[0,\delta]$ again, we get
\begin{equation*}
  \alpha \ge -w(\delta;\alpha) + \alpha 
  = \int_0^\delta \int_0^s h(t) w(t;\alpha)^p dt ds.
\end{equation*}
Hence, since $h(t)=\left(\cos r(t)\right)^{1-p}\left(\cos\sqrt{1-\lambda}r(t)\right)^{p+3}\geq
\left(\cos\pi\sqrt{1-\lambda}/4\right)^{p+3}=:
h_0>0$ on $(0,\delta)$, we observe that
\begin{align*}
 \alpha \ge h_0 \int_0^\delta \int_0^s
   \left(\frac{\alpha}{\delta}(\delta-t)\right)^p dt ds
  = h_0 \frac{\alpha^p \delta^2}{p+2},
\end{align*}
which means that $\alpha^{p-1}\le(p+2)h_{0}^{-1}\delta^{-2}$.
This is a contradiction.
Therefore, $w(t;\alpha)$ has a zero in $[0,\delta]$. 
\end{proof}

\section{Appendix B Proof of Theorem \ref{thmmain}-(iv)}
As in the proof of claim (iii) of Theorem \ref{thmmain}, by Theorem 13-(i)-(b) of \cite{MR3470747}, if
$p\leq \min\{6/\lambda-3,5 \}=6/\lambda$, then \eqref{eqw} has a unique positive radial solution.
Thus, we consider the case 
\begin{align}\label{SWass}
p>\frac{6}{\lambda}-3\quad\quad \text{ and }\quad \quad \frac{3}{4}<\lambda\leq 1.
\end{align}
We will assume $\lambda<1$ for a while.
We show that $\varphi(r,a)$ in \eqref{genuniq} has at most one critical point (a minimum point if it exists) on $(0,a)$. 
We note $\varphi(r,a)$ in \eqref{genuniq} is expressed as:
\begin{align}\label{genuniq2}
2\varphi(r,a):=&-2(p-1)(\sin r)+\left(\sec a\sqrt{1-\lambda}\right)^{2}\cdot\\
&\cdot\Bigl(
(p-1)\left[\left(\cos 2r\sqrt{1-\lambda}\right)+1\right]\left(\sin r\right)-(p+3)\sqrt{1-\lambda}(\sin 2r\sqrt{1-\lambda})\left(\cos r\right)
\Bigr).\nonumber
\end{align}
Derivating $2\varphi(r,a)$ in \eqref{genuniq2} with respect to $r$, we obtain
\begin{align*}
2\varphi_{r}(r,a)=&-2(p-1)(\cos r)\\
&+\left(\sec a \sqrt{1-\lambda }\right)^{2} \Bigl[(\cos r)\left(
p-1+\left(-7-p+2(p+3)\lambda\right)\left(\cos 2r\sqrt{1-\lambda}\right)
\right)\\
&+(5-p)\sqrt{1-\lambda}(\sin r)\left(\sin 2r\sqrt{1-\lambda}\right) \Bigr]\\
=&\left(\sec a \sqrt{1-\lambda }\right)^{2}(\cos r)\cdot\Bigl\{
-2(p-1)\left(\cos a \sqrt{1-\lambda }\right)^{2}\\
&+\Bigl[
p-1+\left(-7-p+2(p+3)\lambda\right)\left(\cos 2r\sqrt{1-\lambda}\right)\\
&+(5-p)\sqrt{1-\lambda}(\tan r)\left(\sin 2r\sqrt{1-\lambda}\right) \Bigr]
\Bigr\}.
\end{align*} 
Define 
\begin{align}
\phi(r)=&
p-1+\left(-7-p+2(p+3)\lambda\right)\left(\cos 2r\sqrt{1-\lambda}\right)\\
&+(5-p)\sqrt{1-\lambda}(\tan r)\left(\sin 2r\sqrt{1-\lambda}\right).\nonumber
\end{align}
\begin{figure}[htb]
 \begin{center}
  \includegraphics[scale=1.0]{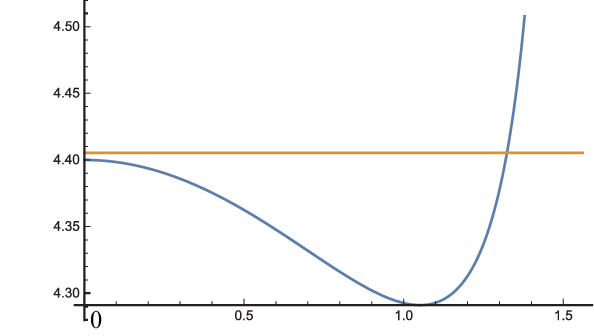}
\caption{The graphs of $r\mapsto \phi(r)$ and $r\mapsto -2(p-1)\left(\cos a \sqrt{1-\lambda }\right)^{2}$ for $r\in (0,a)$.}\label{fth3}
 \end{center}
\end{figure}
\noindent
Figure \ref{fth3} represents the graphs of $r\mapsto \phi(r)$ and $r\mapsto 2(p-1)\left(\cos a \sqrt{1-\lambda }\right)^{2}$ (horizontal line) for $r\in (0,a)$. We will show that two graphs have at most one intersection point on $(0,a)$ (if $a$ is sufficiently close to $\pi/2$ two graphs have exactly one intersection point on $(0,a)$).  
We investigate the behavior of graph of $r\mapsto \phi(r)$.
Derivating $\phi$, we obtain
\begin{align}
\phi_{r}(r)=&\sqrt{1-\lambda}\left[
2\left(p+7-2(p+3)\lambda\right)+(5-p)(\sec r)^{2}
\right]\left(\sin 2r\sqrt{1-\lambda}\right)\\
&+2(5-p)(1-\lambda)\left(\cos 2r\sqrt{1-\lambda}\right)\tan r.\nonumber
\end{align}
Next we show that $\phi_{r}$ has a unique zero point on $(0,\pi/2)$.
To see this we observe that 
\begin{align*}
r\mapsto \sqrt{1-\lambda}\left[
2\left(p+7-2(p+3)\lambda\right)+(5-p)(\sec r)^{2}
\right]=:\phi_{1}(r)
\end{align*} 
and
\begin{align*}
r\mapsto -2(5-p)(1-\lambda)\left(\cot 2r\sqrt{1-\lambda}\right)\tan r=:\phi_{2}(r)
\end{align*}
has a unique intersection point on $(0,\pi/2)$. 
\begin{figure}[htb]
\begin{center}
  \includegraphics[scale=0.7]{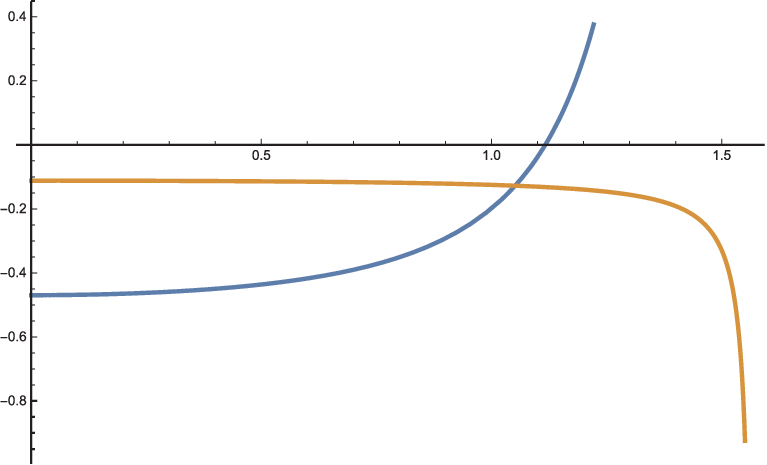}
\caption{The graphs of $r\mapsto \phi_{1}(r)$ (incleasing curve) and $r\mapsto \phi_{2}(r)$ (decreasing curve) for $r\in (0,\pi/2)$.}\label{fth3-2}
 \end{center}
\end{figure}
Since $3/4<\lambda<1$,
\begin{align*}
\phi_{1;r}(r)=&2(5-p)\sqrt{1-\lambda}(\sec r)^{2}\tan r>0\\
\phi_{2;r}(r)=&-\frac{(5-p)(1-\lambda)}{(\sin 2r\sqrt{1-\lambda})^{2}(\cos r)^{2}}\left(
\sin 4r\sqrt{1-\lambda}-2\sqrt{1-\lambda}\sin 2r\right)<0,
\end{align*}
$\phi_{1}$ is monotone increasing and $\phi_{2}$ is monotone decreasing on $(0,\pi/2)$; see Figure \ref{fth3-2}.
Moreover, since $3/4<\lambda<1$, it holds
\begin{align*}
\lim_{r\rightarrow \pi/2}\phi_{1}(r)=\infty \quad \text{ and }\quad
\lim_{r\rightarrow \pi/2}\phi_{2}(r)=-\infty
\end{align*}
and since $p>6/\lambda -3$,
\begin{align}\label{zeroineq}
\phi_{1}(0) < \lim_{r\rightarrow 0}\phi_{2}(r).
\end{align}
We note from \eqref{SWass}, \eqref{zeroineq} holds.
Thus, we have shown that $\phi_{r}$ has a unique zero point on $(0,\pi/2)$, and hence $\phi$ has a unique critical point (minimum point) on $(0,\pi/2)$.
We show that $2(p-1)\left(\cos a\sqrt{1-\lambda}\right)^{2}>\phi(0)=-8+2(p+3)\lambda$. 
Since $2(p-1)\left(\cos a\sqrt{1-\lambda}\right)^{2}\geq 2(p-1)\left(\cos (\pi/2)\sqrt{1-\lambda}\right)^{2}$
it is sufficient to show
\begin{align*}
2(p-1)\left(\cos \frac{\pi}{2}\sqrt{1-\lambda}\right)^{2}\geq -8+2(p+3)\lambda .
\end{align*}
This is equivalent to
\begin{align*}
\left(2\lambda-1-\cos\pi\sqrt{1-\lambda}\right)p\leq 7-6\lambda -\cos\pi\sqrt{1-\lambda}.
\end{align*}
We can see that $\left(2\lambda-1-\cos\pi\sqrt{1-\lambda}\right)>0$ for $3/4<\lambda<1$. Thus, if
\begin{align*}
 p\leq \left(7-6\lambda-\cos\pi\sqrt{1-\lambda}\right)/\left(2\lambda-1-\cos\pi\sqrt{1-\lambda}\right)=I(\lambda),
\end{align*}
then $\varphi(r,a)$ has at most one criptical point (minimum point, if it exists) on $(0,a)$.
In addition, it holds
\begin{align*}
\varphi(0,a)=&0\\
\varphi(a,a)=&-(p+3)\sqrt{1-\lambda}(\cos a)(\tan a\sqrt{1-\lambda})<0,
\end{align*}
so $\varphi(r,a)<0$ on $(0,a)$. This completes the proof for the case $3/4<\lambda<1$.

Next, we show the case $\lambda=1$. In this case, since $t=r$, we obtain 
\begin{align}\label{Hrep3}
H(r)&=-b(r)h(r)+\frac{1}{p+1}\left(a(r)h(r)\right)_{r}\\
&=-\frac{p+3}{p+1}rh(r)+\frac{1}{p+1}\left(a^2-r^2\right)h_{r}(r)\nonumber\\
&=-\frac{p+3}{p+1}r\left(\cos r\right)^{1-p}+\frac{p-1}{p+1}\left(a^2-r^2\right)\left(\cos r\right)^{-p}\left(\sin r\right)\nonumber\\
&=\frac{p+3}{p+1}\left(\cos r\right)^{-p}(\sin r)\left[
-r(\cot r)+\frac{p-1}{p+3}(a^2-r^2)\right]\nonumber\\
&\leq \frac{p+3}{p+1}\left(\cos r\right)^{-p}(\sin r)\left[
-r(\cot r)+\frac{p-1}{p+3}\left(\left(\frac{\pi}{2}\right)^2-r^2\right)\right]\nonumber
\end{align} 
We put
\begin{align*}
 \varphi(r):= -r(\cot r)+\frac{p-1}{p+3}\left(\frac{\pi}{2}^2-r^2\right).
\end{align*}
By direct computation, we obtain
\begin{align*}
\varphi_{rrr}(r)=\frac{1}{\left(\sin r\right)^{4}}\left(2r(2+\cos 2r)-3\sin 2r\right)>0,\,\,\,\,r\in (0,\pi/2),
\end{align*}
since 
\begin{align*}
\left(2r(2+\cos 2r)-3\sin 2r\right)_{r}=4(\sin 2r)(-r+\tan r)>0,\,\,\,\,r\in (0,\pi/2)
\end{align*}
and
\begin{align*}
\left(2r(2+\cos 2r)-3\sin 2r\right)\Big|_{r=0}=0.
\end{align*}
Hence $\varphi_{rr}$ is monotone increasing. 
Further, we have
\begin{align*}
\lim_{r\rightarrow 0}\varphi_{rr}(r)=\lim_{r\rightarrow 0}-\frac{2 \left((p+3) (r \cot (r)-1) \csc ^2(r)+p-1\right)}{p+3}=-\frac{4(p-3)}{3(p+3)}
\end{align*}
and $\varphi_{rr}\left(\frac{\pi}{2}\right)=8/(p+3)>0$.
Thus, in the case $1<p\leq 3$, $\varphi_{r}$ is monotone increasing on $(0,\pi/2)$, while in the case $3<p$, $\varphi_{r}$ has one minimum point on $(0,\pi/2)$.
Hence, from
\begin{align*}
\lim_{r\rightarrow 0}\varphi_{r}(r)=\lim_{r\rightarrow 0}\left(r \left(\frac{8}{p+3}+\csc ^2(r)-2\right)-\cot (r)\right)=0
\end{align*}
and $\varphi_{r}\left(\frac{\pi}{2}\right)=\frac{\pi}{2}\cdot\frac{5-p}{p+3}>0$,
we see that in the case $1<p\leq 3$, $\varphi$ is monotone increasing  on $(0,\pi/2)$, while in the case $3<p$, $\varphi$
has one minimal point on $(0,\pi/2)$.
Since 
\begin{align*}
\varphi(0)=\frac{p-1}{p+3}a^{2}-1<\frac{p-1}{p+3}\left(\frac{\pi}{2}\right)^{2}-1\leq 0
\end{align*}
We note the last inequality can be rewritten as
\begin{align*}
p\leq \frac{\pi^2+12}{\pi^2-4}. 
\end{align*}
In addition, $\varphi(\pi/2)=0$ holds, thus if $1<p\leq (\pi^2+12)/(\pi^2-4)=\lim_{\lambda\rightarrow 1}I(\lambda)$, we obtain the assertion $H(r)<0$, $r\in (0,a)$. This completes the proof.
\qed

\noindent
\begin{large}
{\bf Acknowledgement}\\
\end{large}
This work is partially supported by 
JSPS KAKENHI Grant Number 18K03387, 19K03595,
from Japan Society for the Promotion of Science.

\end{document}